\theoremstyle{plain}
\newtheorem{lem}{Lemma}[section]
\newtheorem{prop}[lem]{Proposition}
\newtheorem{thm}[lem]{Theorem}
\newtheorem*{thm*}{Theorem}
\newtheorem{cor}[lem]{Corollary}
\newtheorem*{cor*}{Corollary}
\newtheorem*{question*}{Question}
\theoremstyle{definition}
\newtheorem{defn}[lem]{Definition}
\newtheorem*{defn*}{Definition}
\newtheorem{ex}[lem]{Example}
\newtheorem*{ex*}{Example}
\newtheorem{rem}[lem]{Remark}
\newtheorem*{rem*}{Remark}
\newtheorem*{thmA}{Theorem A}
\newtheorem*{thmB}{Theorem B}
\newtheorem*{thmC}{Theorem C}
\theoremstyle{remark}
\DeclareMathOperator{\diam}{diam}
\DeclareMathOperator{\dist}{dist}
\DeclareMathOperator{\Leb}{Leb}
\DeclareMathOperator{\Sing}{Sing}
\newcommand{\C}{\mathbb C}
\newcommand{\D}{\mathbb D}
\newcommand{\clC}{\widehat \C}
\newcommand{\R}{\mathbb R}
\newcommand{\SSS}{\mathcal S}
\newcommand{\B}{\mathcal B}
\newcommand{\DD}{\mathcal D}
\newcommand{\PP}{\mathcal P}
\newcommand{\BB}{\mathcal B}
\begin{document}

\title{Conformal measures for meromorphic maps}

\date{\today}

\author{Krzysztof Bara\'nski}
\address{Institute of Mathematics, University of Warsaw,
ul.~Banacha~2, 02-097 Warszawa, Poland}
\email{baranski@mimuw.edu.pl}

\author{Bogus{\l}awa Karpi\'nska}
\address{Faculty of Mathematics and Information Science, Warsaw
University of Technology, ul.~Ko\-szy\-ko\-wa~75, 00-662 Warszawa, Poland}
\email{bkarpin@mini.pw.edu.pl}

\author{Anna Zdunik}
\address{Institute of Mathematics, University of Warsaw,
ul.~Banacha~2, 02-097 Warszawa, Poland}
\email{A.Zdunik@mimuw.edu.pl}

\thanks{Research supported by the Polish NCN grant 2014/13/B/ST1/04551, carried out at the University of Warsaw.}
\subjclass[2010]{Primary 37F10, 37F35. Secondary 28A80.}

\begin{abstract}
In this paper we study the relation between the existence of a conformal measure on the Julia set $J(f)$ of a transcendental meromorphic map $f$ and the existence of zero of the topological pressure function $t \mapsto P(f, t)$ for the map $f$. In particular, we show that if $f$ is hyperbolic and there exists a $t$-conformal measure which is not totally supported on the set of escaping points, then $P(f, t) = 0$. On the other hand, for a wide class of maps $f$, including arbitrary maps with at most finitely many poles and finite set of singular values and hyperbolic maps with at most finitely many poles and bounded set of singular values, if $P(f, t) = 0$, we construct a $t$-conformal measure on $J(f)$. This partially answers a question of R.~D.~Mauldin.
\end{abstract}

\maketitle

\section{Introduction}\label{introduction}

Let $f:\mathbb{C}\to \widehat{\mathbb{C}}$ be a transcendental meromorphic function. We denote by $f^n=f\circ \dots \circ f$ the $n$-th iterate of $f$. 
The \emph{Fatou set} $F(f)$ consists of all points $z\in\mathbb{C}$ for which there exists a neighbourhood $U$ of $z$ such that the family of iterates $\{f^n|_U\}_{n > 0}$ is defined and normal. The complement $\C\setminus F(f)$ is called the \emph{Julia set} and is denoted by $J(f)$. Note that some authors define $J(f)$ as  $\clC\setminus F(f)$, so that it always contains the point at infinity. In this paper we adopt the convention $\infty\notin J(f)$. Intuitively, the Julia set  carries the chaotic part of the dynamics of $f$. See e.g.~\cite{bergweiler} for a detailed presentation of the theory of iteration of transcendental meromorphic maps. 

In this paper we investigate ergodic properties of the dynamics of transcendental maps, using the tools of \emph{thermodynamic formalism}, developed by D.~Ruelle, R.~Bowen and P.~Walters in the 1970's and applied successfully to the study of the dynamics of rational maps on the Riemann sphere (see \cite{ruelle-book,pu-book} and the references therein). An important result in this area is the celebrated \emph{Bowen's formula}, which states that the Hausdorff dimension of the Julia set $J(f)$ of a hyperbolic rational map $f$ is equal to the unique zero 
of the \emph{topological pressure function} 
\begin{equation}\label{eq:bowen-form}
t \mapsto P(f, t) =
\lim_{n\to\infty} \frac{1}{n} \ln \sum_{w \in f^{-n}(z_0)}
|(f^n)'(w)|^{-t}
\end{equation}
for $z_0 \in J(f)$ (see \cite{rufusbowen}). Recall that a rational map $f$ is hyperbolic, if the closure in $\clC$ of the union of forward trajectories of all critical values of $f$ is disjoint from $J(f)$. 

The attempts to generalise the theory of thermodynamic formalism to the case of transcendental meromorphic maps started in the 1990's (see \cite{baranski-tangent,mauldin-urbanski}) and progressed in the subsequent years (see e.g.~\cite{KU1,urbanski-zdunik2,KU2,urbanski-zdunik1,KU3,KU4,skorulski,urbanski-zdunik3,mayer-urbanski-etds,BKZ-IMRN,bowen} and surveys \cite{ku,mayer-urbanski-mem}). 

In \cite{urbanski-zdunik2,urbanski-zdunik1}, M.~Urba\'nski and A.~Zdunik established the thermodynamic formalism theory for hyperbolic exponential maps of the form $z \mapsto \lambda e^z$, $\lambda \in \C \setminus \{0\}$. Recall that for the transcendental case, in the definition of a hyperbolic map $f$ one requires that the closure of the union of forward trajectories of all singular (critical and asymptotic) values of $f$ is bounded and disjoint from the Julia set (see Definition~\ref{defn:hyperb}). In the above two papers the authors discovered a crucial role of the \emph{radial Julia set} $J_r(f)$ (see Definition~\ref{def:radial}) in the study of the ergodic properties of transcendental maps. In particular, it turned out that the Bowen formula 
holds for a large class of transcendental maps in a modified form: the zero of the pressure function is equal to the Hausdorff dimension of the radial Julia set. As shown in \cite{rempe-radial}, the dimension of the radial Julia set coincides with the hyperbolic dimension of $J(f)$. Contrary to the case of rational dynamics, the radial Julia set of a transcendental map is often essentially smaller (in the sense of dimension) than the whole Julia set, even in the hyperbolic case. For instance, the Julia set of an arbitrary exponential map has Hausdorff dimension $2$ (see \cite{mcmullen-area}), while the Hausdorff dimension of the radial Julia set for a hyperbolic exponential map is greater than $1$ and smaller than $2$ (see \cite{urbanski-zdunik2,urbanski-zdunik1}). The fact that the Hausdorff dimension of $J_r(f)$ is 
greater than $1$ was generalised in \cite{BKZ-IMRN} to the case of transcendental meromorphic maps with logarithmic tracts over $\infty$ (in particular, for maps with a bounded set of singular values (class $\BB$), which are entire or meromorphic with a finite number of poles). See Definition~\ref{defn:tract} for a precise definition of a logarithmic tract. 

In \cite{baranski-tangent,KU1,KU2,KU3,skorulski}, elements of the thermodynamic formalism theory were established for other families of hyperbolic transcendental maps, both entire and meromorphic, including the sine and tangent family. The most general approach was presented in \cite{mayer-urbanski-etds,mayer-urbanski-mem}, where V.~Mayer and M.~Urba\'nski developed the thermodynamical formalism theory for hyperbolic transcendental meromorphic maps of finite order  
with the so-called \emph{balanced derivative growth condition}, which relates the growth of the derivative of the function (when its argument tends to infinity) with the growth of the function itself. This class includes many families of maps of the form $P(e^Q)$, where $P, Q$ are polynomials or rational functions.

Note that the pressure function defined in \eqref{eq:bowen-form} with the derivative of $f$ in the standard (Euclidean) metric is usually not suitable for transcendental case, since it can be infinite for all values of $t$. To overcome this difficulty, one considers derivative in some other conformal metric on $\C$. For instance, in \cite{mayer-urbanski-etds,mayer-urbanski-mem}, this metric has the form $d\rho = \frac{dz}{1 + |z|^\beta}$ for suitable $\beta \in \R$. 

In the previous paper \cite{bowen} the authors presented a general version of Bowen's formula in the context of transcendental dynamics, which holds for 
all transcendental meromorphic maps with a finite set of singular values (class $\SSS$) and a large class of hyperbolic and non-hyperbolic maps from class $\BB$.
The formula asserts that the Hausdorff dimension of the radial Julia set of a map $f$ is equal to a number $t_0$, which the infimum of the values $t > 0$ for which $P(f,t)$ is non-positive, where $P(f,t)$ is the pressure function defined in \eqref{eq:bowen-form}, with the derivative of $f$ taken in the spherical metric $ds = \frac{2dz}{1 + |z|^2}$. See Definition~\ref{defn:pressure} and Theorem~\ref{thm:pressure} for a precise formulation of the result. 
Note that similar results were obtained previously by F.~Przytycki, J.~Rivera-Letelier and S.~Smirnov \cite{conical, PRS} for arbitrary rational maps.

In the present paper, we consider another element of the thermodynamic formalism theory in the general setup of transcendental maps, investigating the question of the existence of a $t$-\emph{conformal measure}, i.e.~a Borel probability measure $\nu$ on the Julia set $J(f)$ such that
\[
\nu(f(A))=\int_A|f'(z)|^t d\nu(z)
\]
for every Borel set $A \subset \C$ on which $f$ is injective (see Definition~\ref{def:conformal}). This notion, introduced by S.~Patterson \cite{patterson} and D.~Sullivan \cite{sullivan}) in the context of Fuchsian and Kleinian groups, and developed in the papers by M.~Denker and M.~Urba\'nski (see e.g.~\cite{DU-parab,DU-conf,denker-urbanski}, proved to be extremely useful in many areas of conformal dynamics (see e.g.~\cite{LSS,DG,conical,BPS,graczyk-smirnov,VV,klaus} and surveys \cite{MU-measures,ku}).

In the context of the dynamics of rational maps, it is known that the Julia set of a rational map always admits a $t$-conformal measure for a suitable $t>0$, and the minimal exponent $t$ for which such a measure exists is equal to the Hausdorff dimension of the radial Julia set (see e.g.~\cite{conical}). 

In the case of transcendental maps the question of the existence of a $t$-conformal measure, where $t$ is the Hausdorff dimension of the radial Julia set, has been proved for several specific families of maps, e.g.~for the maps considered in \cite{urbanski-zdunik3,mayer-urbanski-etds}. However, the general question on the existence of such a conformal measure has been open.

Seeking an analogy between transcendental dynamics and the theory of infinite conformal iterated function systems (CIFS), developed by R.~D.~Mauldin and M.~Urba\'nski, recall that a conformal iterated function system is called \emph{regular}, if it admits a conformal measure on its limit set. In \cite[Theorem 3.5 and Lemma 3.13]{mauldin-urbanski} it was proved that a CIFS is regular if and only if there exists $t>0$ such that $P(t)=0$ where $P(t)$ is the topological pressure function defined for this system. In view of this, R.~D.~Mauldin (in a private communication) asked the following question. 

\begin{question*}
Let $f:\C \to \clC$ be a meromorphic transcendental function for which the pressure function $P(f,t)$ $($with the derivative in the spherical metric$)$ can be defined. Is the existence of a value $t>0$ such that $P(f,t)=0$ equivalent to the existence of a $t$-conformal measure on $J(f)?$
\end{question*}

Note that, in general, we do not know in which cases we have $P(f,t_0) = 0$ for $t_0$ defined above (although $P(f,t)$ is continuous and convex when it is finite, it could have a `jump' from the infinite value at $t = t_0$). It is known that $P(f,t_0) = 0$ for many functions $f$, including functions considered in \cite{mayer-urbanski-etds,mayer-urbanski-mem}, but it is an open question whether 
the opposite case can actually appear (cf.~Proposition~\ref{prop:clasif}).

In this paper we partially answer Mauldin's question. From now on, let
\[
P(f,t) = P(f,t,z_0)=\lim_{n\to\infty}\frac 1 n\ln\sum_{w\in
f^{-n}(z_0)}|(f^n)^*(w)|^{-t}
\]
for $t > 0$, where 
\[
f^*(z)  = \frac{(1+|z|^2)f'(z)}{1+|f(z)|^2}
\]
is the \emph{spherical derivative} of $f$, i.e.~the derivative with respect to  the \emph{spherical metric} defined by
\[
ds = \frac{2 \; dz}{1+|z|^2}. 
\]
Note that for the maps under consideration, the value of the pressure function $P(f,t,z_0)$ does not depend on $z_0 \in \C$ up to a set of Hausdorff dimension $0$ (see Theorem~\ref{thm:pressure}). The $t$-conformal measures $m_t$ we consider are also taken with respect to the spherical metric, i.e.~they satisfy
\[
m_t(f(A))=\int_A|f^*(z)|^t dm_t(z)
\]
for every Borel set $A \subset \C$ on which $f$ is injective.
We use the notion on the \emph{escaping set} of $f$ defined as
\[
I(f) = \{z \in \C: \{f^n(z)\}_{n=0}^\infty \text{ is defined and } f^n(z) \to \infty \text{ as } n \to \infty\}.
\]
The first result we prove is the following.

\begin{thmA}
If a hyperbolic meromorphic map $f: \C \to \clC$ admits a $t$-conformal measure $m_t$ for some $t > 0$, with respect to the spherical metric, then $P(f, t) \leq 0$. Moreover, if $m_t(J(f) \setminus I(f))>0$, then $P(f,t)=0$. 
\end{thmA}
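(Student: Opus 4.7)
The plan is to use the conformal measure $m_t$ as a dictionary between the distortion estimates available under hyperbolicity and the exponential sums defining $P(f,t)$. Since $f$ is hyperbolic, the closure of the singular set has positive spherical distance from $J(f)$, so there is $\delta>0$ such that for every $z\in J(f)$ and every $n\ge 1$, each branch of $f^{-n}$ is well-defined on the spherical disk $B := B_{\mathrm{sph}}(z,\delta)$ and has bounded (Koebe-type) distortion in the spherical metric. Writing $V_w$ for the component of $f^{-n}(B)$ containing the preimage $w\in f^{-n}(z)$, the defining property of $m_t$ combined with Koebe yields
$$m_t(B) \;=\; m_t(f^n(V_w)) \;=\; \int_{V_w}|(f^n)^*|^t\,dm_t \;\asymp\; |(f^n)^*(w)|^t\,m_t(V_w),$$
with multiplicative constants depending only on $\delta$ and $t$.

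\emph{Upper bound.} Pick $z_0\in J(f)\cap\supp m_t$, chosen so that the pressure limit also exists and takes the canonical value (possible by Theorem~\ref{thm:pressure}). Since the $V_w$ for $w\in f^{-n}(z_0)$ are pairwise disjoint subsets of $\C$ with total $m_t$-mass at most $1$, rearranging and summing the above comparison over $w$ gives
$$\sum_{w\in f^{-n}(z_0)} |(f^n)^*(w)|^{-t} \;\le\; \frac{C}{m_t(B)}.$$
Taking logarithms, dividing by $n$, and letting $n\to\infty$ yields $P(f,t)\le 0$.

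\emph{Lower bound under the extra hypothesis.} Suppose $m_t(J(f)\setminus I(f))>0$ and, for a contradiction, that $P(f,t)<-\varepsilon$ for some $\varepsilon>0$. Modulo the countable set of points whose forward orbit eventually hits a pole, every $z\in J(f)\setminus I(f)$ satisfies $\liminf_n|f^n(z)|<\infty$, so there is $R>0$ with $m_t(B_R)>0$, where
$$B_R := \{z\in J(f) : |f^n(z)|\le R \text{ for infinitely many }n\}.$$
Cover the compact set $K_R := J(f)\cap\overline{\{|z|\le R\}}$ by finitely many spherical disks $B(z_i,\delta/2)$, $i=1,\dots,N$. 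Since $B_R\subset\bigcup_{n\ge N_0}f^{-n}(K_R)$ for every $N_0$, applying the two-sided comparison to each preimage component over each $z_i$ gives
$$m_t(B_R) \;\le\; \sum_{n\ge N_0}\sum_{i=1}^{N} m_t\bigl(f^{-n}(B(z_i,\delta/2))\bigr) \;\lesssim\; \sum_{n\ge N_0}\sum_{i=1}^{N} m_t(B(z_i,\delta/2))\sum_{w\in f^{-n}(z_i)}|(f^n)^*(w)|^{-t}.$$
The inner sums are $O(e^{-\varepsilon n})$ by assumption, so the right-hand side tends to $0$ as $N_0\to\infty$, contradicting $m_t(B_R)>0$. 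Hence $P(f,t)\ge 0$, which combined with the upper bound gives $P(f,t)=0$.

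The main obstacle is technical rather than conceptual: one must justify uniform Koebe distortion in the spherical metric and the clean decomposition of $f^{-n}(B)$ into pairwise disjoint components, each mapping biholomorphically onto $B$, in the transcendental meromorphic setting. This requires verifying that the spherical disks $B(z,\delta)$ avoid all singular values (including asymptotic values and any singular values accumulating at $\infty$) and that monodromy over such disks yields single-valued inverse branches; both facts follow from the hyperbolic hypothesis, but need to be set up carefully in the spherical metric uniformly over $z\in J(f)$.
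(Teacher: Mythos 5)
Your proof is correct and takes essentially the same route as the paper, which obtains Theorem A as the hyperbolic specialization of Theorem B, case (b): there the upper bound comes from the Koebe-controlled comparison $m_t(f^{-n}(D)) \asymp m_t(D)\,\sum_{w\in f^{-n}(z_0)}|(f^n)^*(w)|^{-t}$ together with $m_t(f^{-n}(D))\le 1$, and the lower bound is the same Borel--Cantelli computation you phrase as a tail estimate of $m_t(B_R)$. The one point worth flagging is that your phrase ``modulo the countable set of points whose forward orbit eventually hits a pole'' implicitly uses that this set has $m_t$-measure zero (and, for the choice of $z_0$, that $\supp m_t$ contains a GPS point, which the paper secures via Proposition~\ref{prop:support}); the paper's own deduction of Theorem A from Theorem B is equally terse on both counts.
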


The case $m_t(J(f) \setminus I(f))=0$ can actually occur, as shown in the following example.

\begin{ex} For $f(z) = \lambda \sin z$, $\lambda \in \C \setminus \{0\}$. Then 
$I(f)$ has positive $2$-dimensional Lebesgue measure (see \cite{mcmullen-area}) and the normalized $2$-dimensional spherical Lebesgue measure on $I(f)$ is $2$-conformal. Moreover, if additionally, $f$ is hyperbolic, then $P(f,2) < 0$ (see \cite{skorulski,mayer-urbanski-etds}).
\end{ex}

Denote by $\Sing(f)$ the set of all finite singular (critical and asymptotic) values of $f$ and let 
\[
\mathcal P(f) = \bigcup_{n=0}^\infty f^n(\Sing(f)),
\]
where we neglect terms which are not defined. 

Recall that we consider two classes of transcendental meromorphic functions:
\[
\mathcal S = \{f: \Sing(f) \text{ is finite}\}, \qquad
\mathcal B = \{f: \Sing(f) \text{ is bounded}\}.
\]

We will call a meromorphic map $f$ \emph{exceptional}, if there exists a (Picard) exceptional value $a$ of $f$, such that $a \in J(f)$ and $f$ has a non-logarithmic singularity over $a$. 

Theorem~A is an immediate corollary from the following more general result.

\begin{thmB} Let $f:\C \to \clC$ be a meromorphic map. Suppose that there exists  a $t$-conformal measure $m_t$ on $J(f)$, with respect to the spherical metric, for some $t > 0$. Then the following hold.
\begin{itemize}
\item[\rm (a)] If $f \in \mathcal S$, then:
\begin{itemize}
\item[$\circ$] $P(f, t) \le 0$ or there exists a  set $E \subset \overline{\PP(f)}$ of Hausdorff dimension $0$, such that $m_t(E)=1$,
\item[$\circ$] $P(f, t) \ge 0$ or $f^n(z) \to \PP(f) \cup \{\infty\}$ as $n \to \infty$ for $m_t$-every $z$.
\end{itemize}
\item[\rm (b)] If $f \in \mathcal B$ is non-exceptional and $J(f) \setminus\overline{\PP(f)} \neq \emptyset$, then:
\begin{itemize}
\item[$\circ$] $P(f, t) \leq 0$, 
\item[$\circ$] $P(f, t) \ge 0$ or $f^n(z) \to \overline{\PP(f)} \cup \{\infty\}$ as $n \to \infty$ for $m_t$-every $z$.
\end{itemize}
\end{itemize}
\end{thmB}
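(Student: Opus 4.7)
I would prove Theorem~B by establishing a single inverse-branch pressure--measure inequality and then specializing it, using the pressure-independence result of Theorem~\ref{thm:pressure} for the $P(f,t)\le 0$ conclusions and a Borel--Cantelli argument for the orbit-convergence conclusions. The core estimate goes as follows. I fix $z_0\in J(f)\setminus\overline{\PP(f)}$ and a radius $r>0$ so small that the spherical ball $B_s(z_0,2r)$ is disjoint from $\overline{\PP(f)}$. Since $B_s(z_0,2r)$ is simply connected and meets no singular value of any iterate $f^n$ (because $\Sing(f^n)\subset\PP(f)$), each component $V_w$ of $f^{-n}(B_s(z_0,2r))$ indexed by $w\in f^{-n}(z_0)$ is mapped univalently onto $B_s(z_0,2r)$; Koebe's theorem then yields a uniform spherical-distortion bound for the inverse branches on $B_s(z_0,r)$. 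Iterating the $t$-conformality relation gives $m_t(V_w)\asymp|(f^n)^*(w)|^{-t}\,m_t(B_s(z_0,r))$, and pairwise disjointness of the $V_w$ together with $m_t(J(f))=1$ forces
\[
\sum_{w\in f^{-n}(z_0)}|(f^n)^*(w)|^{-t}\le \frac{C}{m_t(B_s(z_0,r))},
\]
so $P(f,t,z_0)\le 0$ whenever $m_t(B_s(z_0,r))>0$.

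To obtain the first bullets, I combine the core estimate with the fact that $P(f,t,z_0)=P(f,t)$ outside a set $H$ of Hausdorff dimension zero. In case (b), conformality plus positivity of $|f^*|$ off the poles makes $\supp(m_t)$ closed, non-empty and backward-invariant inside $J(f)$; non-exceptionality of $f\in\BB$ then forces $\supp(m_t)=J(f)$, so the hypothesis $J(f)\setminus\overline{\PP(f)}\neq\emptyset$ supplies a $z_0$ outside $H$ to which the core estimate applies, and one obtains $P(f,t)\le 0$ unconditionally. In case (a), Lebesgue differentiation of $m_t$ as a Radon measure gives $m_t(B_s(z_0,r))>0$ at $m_t$-a.e.\ $z_0\in J(f)\setminus\overline{\PP(f)}$; consequently, if $P(f,t)>0$ then $m_t$-almost all of $J(f)\setminus\overline{\PP(f)}$ must sit in $H$. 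Using that $\PP(f)$ is countable for $f\in\SSS$, an approximation argument at the accumulation points of $\PP(f)$ would then assemble a zero-dimensional carrier $E\subset\overline{\PP(f)}$ with $m_t(E)=1$.

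For the second bullets, I assume $P(f,t)<0$ and set $X=\{z\in J(f):f^n(z)\not\to\overline{\PP(f)}\cup\{\infty\}\}$, noting that in case (a) this coincides with $\{z:f^n(z)\not\to\PP(f)\cup\{\infty\}\}$ since spherical distance to a set equals the distance to its closure. Each $z\in X$ visits the compact set $K_\delta=\{d_s(\cdot,\overline{\PP(f)}\cup\{\infty\})\ge\delta\}\subset\C\setminus\overline{\PP(f)}$ infinitely often, for some $\delta=\delta(z)>0$. Covering $K_\delta$ by finitely many balls satisfying the core-estimate hypothesis and summing yields $m_t(f^{-n}(K_\delta))\lesssim e^{n(P(f,t)+\varepsilon)}$, which is summable in $n$ for small $\varepsilon>0$; Borel--Cantelli plus a countable exhaustion over $\delta\in\Q\cap(0,\infty)$ then give $m_t(X)=0$. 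The hardest step I expect is the concentration claim in case (a): the core estimate is unavailable at accumulation points of $\PP(f)$ since every neighborhood meets the postsingular set, so the argument must exploit countability of $\Sing(f)$ (the class-$\SSS$ hypothesis) to handle $\overline{\PP(f)}\setminus\PP(f)$ by approximation from $J(f)\setminus\overline{\PP(f)}$; this countability, absent from class $\BB$, is precisely what permits the stronger conclusion in (a) and forces the weaker form in (b).
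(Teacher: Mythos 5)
Your treatment of case~(b) and of the second bullet in both cases follows the same route as the paper: pick a ball $\DD(z_0,r)$ with $\DD(z_0,2r)\cap(\overline{\PP(f)}\cup\{\infty\})=\emptyset$, use Koebe and $t$-conformality to get $m_t(f^{-n}(D))\asymp m_t(D)\,S_n(t,z_0)$, invoke Proposition~\ref{prop:support} (your support-is-$J(f)$ remark) to find such a $D$ with positive measure, and run Borel--Cantelli. That part is fine.

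The genuine gap is in the first bullet of case~(a). Your argument produces, when $P(f,t)>0$, the inclusion $\supp(m_t)\cap(J(f)\setminus\overline{\PP(f)})\subset H$, where $H$ is the dimension-zero exceptional set of non-GPS points. This gives $m_t(J(f)\setminus\overline{\PP(f)})\le m_t(H)$, i.e.\ $m_t$ is carried by $\overline{\PP(f)}\cup H$. That is strictly weaker than the statement: the theorem requires a \emph{single} set $E$ that is \emph{both} contained in $\overline{\PP(f)}$ \emph{and} of Hausdorff dimension zero with $m_t(E)=1$. The set $\overline{\PP(f)}$ can have large dimension, and $H$ need not lie in $\overline{\PP(f)}$, so the union $\overline{\PP(f)}\cup H$ does not qualify. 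The proposed ``approximation argument at accumulation points of $\PP(f)$'' is not a construction; and in the extreme case $J(f)\subset\overline{\PP(f)}$ (which is not excluded in case~(a) --- unlike in case~(b)) your core estimate has no ball of fixed radius $r$ on which to get started at all, so the whole scheme collapses precisely when the $E$-alternative is the substantive one.

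The paper resolves this with a construction you did not attempt: it replaces fixed-radius balls by \emph{shrinking} neighbourhoods $D_n=\DD\bigl(\bigcup_{m<n}f^m(\Sing(f))\cup\{\infty\},\,e^{-\sqrt n}\bigr)$, defines $E=\limsup_n D_n$ and $A_k=\bigcap_{n\ge k}(\clC\setminus D_n)$, and shows that for $z\in A_k$ every branch of $f^{-n}$ ($n\ge k$) is defined on $\DD(z,e^{-\sqrt n})$. The $\SSS$ hypothesis (finitely many singular values) makes each $D_n$ a union of $O(n)$ discs of radius $e^{-\sqrt n}$, which simultaneously forces $\dim_H E=0$ (since $\sum_n n\,e^{-s\sqrt n}<\infty$ for every $s>0$), forces $E\subset\overline{\PP(f)}\cup\{\infty\}$ by construction, and gives $\Leb(\clC\setminus A_k)\to 0$, hence a GPS point in $A_k$. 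Distortion control on the shrinking discs costs only a factor $e^{c\sqrt n}$ (via \cite[Lemma~3.1]{conical}), which is subexponential and therefore harmless for the pressure estimate. This is the mechanism that lets the argument operate arbitrarily close to $\overline{\PP(f)}$ and produce the dichotomy in the stated form. Your second bullet for case~(a) is unaffected, since when $J(f)\subset\overline{\PP(f)}$ the escape-to-$\overline{\PP(f)}\cup\{\infty\}$ conclusion holds vacuously, and your observation that $\dist_{sph}(\cdot,\PP(f))=\dist_{sph}(\cdot,\overline{\PP(f)})$ correctly identifies the two formulations.
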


The next result we prove in this paper is the following (see Definition~\ref{defn:tract} for the definition of the logarithmic tract).

\begin{thmC} Let $f:\C \to \clC$ be a meromorphic map from class $\SSS$ with a logarithmic tract over $\infty$ or a non-exceptional map from class $\B$ with a logarithmic tract over $\infty$, such that $J(f) \setminus\overline{\PP(f)} \neq \emptyset$ (e.g. a hyperbolic map $f$, which is entire or has a finite number of poles). If $P(f, t) = 0$ for some $t > 0$, then there exists a $t$-conformal measure $m_t$ with respect to the spherical metric. Moreover, 
\[
m_t(\C \setminus \D(r))=  o\left(\frac{(\ln r)^{3t}}{r^t}\right) \quad \text{as} \quad r \to \infty,
\]
where $\D(r) = \{z \in \C: |z| < r\}$.
\end{thmC}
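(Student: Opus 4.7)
My strategy is a Patterson--Sullivan style construction followed by a weak-$*$ limit argument, with the logarithmic tract hypothesis supplying the tail estimate needed for tightness on the non-compact space $\C$. Pick $z_0 \in J(f)\setminus\overline{\PP(f)}$, which exists by hypothesis; in the class $\B$ case, non-exceptionality together with the choice of $z_0$ guarantees that $z_0$ is not a Picard exceptional value carrying a non-logarithmic singularity. Set
\[
\zeta_n(s) = \sum_{w\in f^{-n}(z_0)} |(f^n)^*(w)|^{-s}, \qquad \Phi(s) = \sum_{n\geq 0} c_n\,\zeta_n(s),
\]
where $c_n\equiv 1$ if $\sum_n \zeta_n(t)=\infty$, and otherwise $c_n$ is a Patterson weight growing subexponentially slowly enough to force $\Phi(t)=\infty$ without changing the critical exponent of $\Phi$, which by Theorem~\ref{thm:pressure} and the assumption $P(f,t)=0$ equals $t$. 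For $s>t$ with $\Phi(s)<\infty$ define
\[
\nu_s = \frac{1}{\Phi(s)} \sum_{n\geq 0} c_n \sum_{w\in f^{-n}(z_0)} |(f^n)^*(w)|^{-s}\,\delta_w,
\]
a Borel probability measure on $\C$. I propose to take $m_t$ as a weak-$*$ accumulation point of $\{\nu_{s_k}\}$ along some sequence $s_k \searrow t$.

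The main obstacle, and the technical heart of the proof, is tightness in the strong quantitative form of the announced tail bound. Let $U$ be a logarithmic tract of $f$ over $\infty$, so that $f|_U = \exp\circ\psi$ for a conformal isomorphism $\psi:U\to\{\Re w > \log R\}$. Koebe distortion applied to $\psi^{-1}$ controls $\psi'$ in terms of the Euclidean geometry of $U$, and combined with $|f'(w)|=|f(w)||\psi'(w)|$ and the spherical normalization it yields a lower estimate of the form $f^*(w) \gtrsim |w|/(\log|w|)^{\alpha}$ for $w\in U$ of large modulus at bounded hyperbolic distance from the central axis of the tract. Every preimage of $z_0$ of large modulus lies in such a tract over some earlier preimage, and by the $\exp$-model the number of preimages in the dyadic annulus $\D(2r)\setminus\D(r)$ inside a single tract is $O(\log r)$. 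Iterating these estimates along backward chains entering $\C\setminus\D(r)$ and bookkeeping the logarithmic corrections yields a uniform estimate
\[
\nu_s(\C\setminus\D(r)) \;\lesssim\; \frac{(\log r)^{3t}}{r^t}
\]
for $s$ close to $t$, which both ensures tightness and passes to the weak limit. The exponent $3t$ of $\log r$ reflects three sources of logarithmic corrections: the spherical derivative normalization, the Koebe/tract geometry, and the geometric summation of preimages within a single tract. I expect this to be the most delicate step and to lean heavily on the tract analysis already developed in \cite{BKZ-IMRN,bowen}, with care needed to make all bounds uniform in $s$ close to $t$ and in the depth $n$.

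Once tightness is secured, verifying $t$-conformality and support on $J(f)$ is standard. For a Borel set $A\subset\C$ on which $f$ is injective and $m_t(\partial A)=0$, re-indexing $n\mapsto n+1$ via the bijection $f: f^{-(n+1)}(z_0)\cap A \to f^{-n}(z_0)\cap f(A)$ and using the chain rule for the spherical derivative produces
\[
\nu_s(f(A)) \;=\; \int_A |f^*|^s\,d\nu_s \;+\; \frac{c_0\,\mathbf{1}_{f(A)}(z_0)}{\Phi(s)} \;+\; E_s(A),
\]
where $E_s(A)\to 0$ as $s\searrow t$ because $\Phi(s)\to\infty$ and $c_{n+1}/c_n\to 1$; passing to the weak limit yields the $t$-conformality identity $m_t(f(A))=\int_A|f^*|^t\,dm_t$. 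Finally, every atom of $\nu_s$ lies in $\bigcup_{n\geq 0} f^{-n}(z_0) \subset J(f)$, and by non-exceptionality together with $z_0\notin \overline{\PP(f)}$ the backward orbit is not confined to a non-logarithmic singularity, so $\supp m_t \subset J(f)$.
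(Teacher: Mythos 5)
Your overall strategy (Patterson--Sullivan construction, tightness from the tract geometry, conformality in the weak limit) is the same as the paper's, but the tightness argument---which you correctly identify as the heart of the proof---is where the proposal has a genuine gap, and the route you sketch is unlikely to work as stated.

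The paper does \emph{not} try to directly show that far-out preimages carry small mass, which is what your sketch attempts via a lower bound on $f^*(w)$ for $|w|$ large. That approach runs into two problems. First, the estimate $f^*(w)\gtrsim |w|/(\log|w|)^\alpha$ would have to hold for preimages $w$ of the \emph{fixed} point $z_0$; such $w$ lie near the boundary of the tract (where $|f(w)|=|z_0|$ is bounded), not ``at bounded hyperbolic distance from the central axis,'' and there the behavior of $|\psi'|$ depends badly on the tract geometry, so no such uniform lower bound is available. Second, even if one had a one-step lower bound, bounding $|(f^n)^*(w)|$ from below for far-out $w$ would require control along the \emph{entire} backward chain, not just the last step; this is what ``iterating along backward chains'' papers over. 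Your preimage count $O(\log r)$ in a dyadic annulus inside a tract is also unjustified and in general false (the growth of $|\psi^{-1}(\log z_0 + 2\pi i k)|$ as $k\to\infty$ can be anywhere from polynomial to logarithmic in $k$ depending on the tract shape).

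The idea you are missing is the key estimate the paper isolates as Lemma~\ref{lem:compact}: for a preimage $z\in f^{-n}(z_0)$ with $|z|\approx 2^k$, there is \emph{one} branch $g$ of $f^{-1}$ into the tract with $|g^*(z)|\gtrsim 2^k/k^{3}$ (equivalently $|f^*(g(z))|\lesssim k^3/2^k$); the point $w=g(z)$ therefore contributes a mass $\gtrsim (2^{kt}/k^{3t})\,|(f^n)^*(z)|^{-t}$ at level $n+1$. Summing over $z$ gives
\[
S_{n+1}(t,z_0)\;\ge\; c\sum_{k\ge k_0}\frac{2^{kt}}{k^{3t}}\,S_n^{\D(2^{k+1})\setminus\D(2^k)}(t,z_0),
\]
and since the Patterson--Sullivan measures are normalized to probability measures, shifting the index $n\mapsto n+1$ immediately turns this into a uniform bound on $\sum_k (2^{kt}/k^{3t})\,\mu_s(\D(2^{k+1})\setminus\D(2^k))$, which gives both tightness and the stated tail decay. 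This is a one-step, backward comparison (``far-out points at level $n$ donate heavy mass at level $n+1$''), not an iterated forward bound; the proof needs only the single-branch distortion estimates of \cite[Cor.~3.7, Cor.~3.9]{bowen}, not any preimage counting.

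A minor additional remark: the paper fixes the exponent $t$ throughout and introduces the small parameter through the factors $e^{-ns}$ with $s\to 0^+$, whereas you vary the exponent $s\searrow t$. Your variant is workable, but it makes the conformality step slightly more delicate, since the integrand $|f^*|^s$ also varies with $s$ and one must justify $\int_A|f^*|^{s}\,d\nu_s\to\int_A|f^*|^t\,dm_t$; the paper's choice keeps the integrand $|f^*|^t$ fixed from the start.
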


\begin{rem}\label{rem:thmB} In fact, the proof gives
\[
\int_{0}^\infty \frac{r^t}{(\ln r)^{3t}} \: m_t(\C \setminus \D(r)) < \infty.
\]
\end{rem}

\begin{rem}
Theorems A and B contribute to answer the R.~D.~Mauldin question in both directions. A main problem which remains to be determined, is under which condition the measure $m_t$ constructed in Theorem~C satisfies $m_t(J(f) \setminus I(f))=0$.
\end{rem}

\section{Preliminaries}\label{sec:prelim}

\subsection*{Notation}

In all definitions and formulations of results we assume that $f$ is a transcendental meromorphic function on the complex plane.

By a \emph{conformal metric} we mean a Riemannian metric on $\C$ of the form
\[
d\rho=\rho dz,
\]
where $dz$ is the standard (Euclidean) metric and $\rho$ is a continuous positive function on $\C$. The derivative of a map $f$ with respect to the metric $d\rho$ is equal to 
\begin{equation}\label{eq:der}
f'_\rho(z) = \frac{\rho(f(z))}{\rho(z)} f'(z),
\end{equation}
where $f'$ is the standard derivative. 
In particular, we consider the \emph{spherical metric} defined by
\[
ds = \frac{2 \; dz}{1+|z|^2} 
\]
and the \emph{spherical derivative}
\[
f^*(z): = f'_s(z) = \frac{(1+|z|^2)f'(z)}{1+|f(z)|^2}.
\]
The \emph{spherical distance} in $\clC$ (defined by the spherical metric) will be denoted by $\dist_{sph}$. 

By $\D(z,r)$ (resp.~$\DD(z,r)$) we denote the disc centred at $z \in \C$ (resp.~$z \in \clC$) of radius $r > 0$ with respect to the Euclidean (resp.~spherical) metric. For simplicity, we write $\D(r)$ for $\D(0, r)$.

\begin{defn}\label{defn:hyperb}
We write $\Sing(f)$ for the \emph{singular set} of $f$, which consists of all finite singular (critical and asymptotic) values of $f$ and define the \emph{post-singular set} by
\[
\mathcal P(f) = \bigcup_{n=0}^\infty f^n(\Sing(f)),
\]
where we neglect terms which are not defined. 

We say that $f$ is \emph{hyperbolic}, if $\overline{\mathcal P(f)}$ is bounded and disjoint from the Julia set of $f$.
We set
\[
\mathcal S = \{f: \Sing(f) \text{ is finite}\}, \qquad
\mathcal B = \{f: \Sing(f) \text{ is bounded}\}.
\]
Note that hyperbolic maps are in the class $\mathcal B$.
\end{defn}

\begin{defn}
We call $f$ \emph{exceptional}, if there exists a (Picard) exceptional value $a$ of $f$, such that $a \in J(f)$ and $f$ has a non-logarithmic singularity over $a$. 
\end{defn}

\subsection*{Radial Julia sets and conformal measures}

\begin{defn}\label{def:radial}
The \emph{radial Julia set} $J_r(f)$ is the set of points $z\in J(f)$ for which all iterates $f^n(z)$ are defined and there exist
$r=r(z)>0$ and a sequence $n_k\to\infty$, such that a holomorphic 
branch of $f^{-n_k}$ sending $f^{n_k}(z)$ to $z$ is well-defined on $\DD(f^{n_k}(z), r)$. 

We denote by $I(f)$ the \emph{escaping set} of $f$, i.e.
\[
I(f) = \{z \in \C: \{f^n(z)\}_{n=0}^\infty \text{ is defined and } f^n(z) \to \infty \text{ as } n \to \infty\}.
\]
\end{defn}

We consider conformal measures with respect to some conformal metrics on $\C$.

\begin{defn}\label{def:conformal} Let $f: \C \to \clC$ be a meromorphic map. We say that a Borel probability measure $\nu$ on the Julia set $J(f)$ is $t$-\emph{conformal} for some $t > 0$ with respect to a conformal metric $d\rho=\rho dz$, if 
\begin{equation}\label{conformality}
\nu(f(A))=\int_A|f_\rho'(z)|^t d\nu(z)
\end{equation}
for every Borel set $A \subset \C$ on which the map $f$ is injective.
\end{defn}

The notion of $t$-conformality is in a sense independent of the chosen metric, as shown in the following proposition. 

\begin{prop}\label{prop:conf_metric} Let $\nu$ be a $t$-conformal measure with respect to a conformal metric $d\rho_1$ and let $d\rho_2=\rho_2 dz$ be another conformal metric, such that 
$
M = \int \eta \, d\nu < \infty
$
for $\eta(z)=\left(\frac{\rho_2(z)}{\rho_1(z)}\right)^t$. Then the measure $\mu$ defined as 
\[
d\mu=\frac \eta M  \, d\nu
\]
is $t$-conformal with respect to the metric $d\rho_2$.
\end{prop}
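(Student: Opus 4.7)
The proposition is essentially a direct computation, so my plan is to reduce it to a chain of substitutions, with the only non-trivial step being the change-of-variables formula coming from the conformality of $\nu$.

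First I would upgrade the defining identity \eqref{conformality} from indicator functions to general integrands. That is, if $A \subset \C$ is a Borel set on which $f$ is injective and $g$ is a non-negative Borel function on $f(A)$, then
\[
\int_{f(A)} g(w)\, d\nu(w) = \int_A g(f(z))\, |f'_{\rho_1}(z)|^t\, d\nu(z).
\]
This is standard: it holds for $g = \mathbf 1_B$ with $B \subset f(A)$ Borel (by conformality applied to $f^{-1}(B) \cap A$, noting that $f$ is injective on this set), then extends to simple functions by linearity and to general non-negative $g$ by monotone convergence.

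Next I would plug in the definition of $\mu$ and apply this change of variables with $g = \eta$. For a Borel set $A$ on which $f$ is injective,
\[
\mu(f(A)) = \frac{1}{M} \int_{f(A)} \eta(w)\, d\nu(w) = \frac{1}{M}\int_A \left(\frac{\rho_2(f(z))}{\rho_1(f(z))}\right)^t |f'_{\rho_1}(z)|^t\, d\nu(z).
\]
Then I would use \eqref{eq:der} to rewrite the derivatives, namely $|f'_{\rho_i}(z)| = \frac{\rho_i(f(z))}{\rho_i(z)}|f'(z)|$ for $i=1,2$, which yields the key algebraic identity
\[
\left(\frac{\rho_2(f(z))}{\rho_1(f(z))}\right)^t |f'_{\rho_1}(z)|^t = |f'_{\rho_2}(z)|^t \left(\frac{\rho_2(z)}{\rho_1(z)}\right)^t = |f'_{\rho_2}(z)|^t\, \eta(z).
\]
Substituting this into the previous display gives
\[
\mu(f(A)) = \int_A |f'_{\rho_2}(z)|^t \, \frac{\eta(z)}{M}\, d\nu(z) = \int_A |f'_{\rho_2}(z)|^t\, d\mu(z),
\]
which is precisely the $t$-conformality of $\mu$ with respect to $d\rho_2$. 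Finally, I would note that $\mu$ is a probability measure by the normalization by $M$, and it is supported on $J(f)$ because $\nu$ is; so $\mu$ is a $t$-conformal probability measure on $J(f)$ as required.

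The only point requiring any care is the change-of-variables lemma in the first paragraph; once it is in place, the rest is a one-line algebraic manipulation using \eqref{eq:der}. I do not see a serious obstacle.
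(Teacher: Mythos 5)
Your proof is correct and follows essentially the same chain of substitutions as the paper's own argument: both reduce to the algebraic relation between $f'_{\rho_1}$ and $f'_{\rho_2}$ coming from \eqref{eq:der}, applied inside the change of variables given by the $t$-conformality of $\nu$. The only difference is that you explicitly establish the change-of-variables formula for general non-negative integrands (via simple functions and monotone convergence), a step the paper treats as implicit in the second equality of its computation.
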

\begin{proof} By \eqref{eq:der}, 
\begin{equation}\label{eq:rho}
f'_{\rho_1}(z) = \frac{\rho_1(f(z))\rho_2(z)}{\rho_2(f(z))\rho_1(z)} f'_{\rho_2}(z).
\end{equation}
Take a Borel set $A \subset C$ on which the map $f$ is injective.
Using the $t$-conformality of $\nu$ and \eqref{eq:rho}, we obtain, 
\begin{align*}
\mu(f(A))&= \frac 1 M \int_{f(A)}\eta(z)d\nu(z)
=\frac 1 M\int_A\eta(f(z)) |f'_{\rho_1}(z)|^t d\nu(z)\\
&=\frac 1 M\int_A \left(\frac{\rho_2(f(z))}{\rho_1(f(z))}\right)^t |f'_{\rho_1}(z)|^td\nu(z)=\frac 1 M\int_A \left(\frac{\rho_2(z)}{\rho_1(z)}\right)^t |f'_{\rho_2}(z)|^t d\nu(z)\\
&=\frac 1 M\int_A \eta(z) |f'_{\rho_2}(z)|^t d\nu(z)=\int_A|f'_{\rho_2}(z)|^t d\mu(z).
\end{align*}
\end{proof}

\begin{rem}
Note that in Proposition~\ref{prop:conf_metric}, if $\int \eta \, d\nu = \infty$, then the measure $d\mu=\eta  \, d\nu$ satisfies \eqref{conformality} but is infinite.
\end{rem}

As noted in the introduction, in this paper we consider $t$-conformal measures taken with respect to the spherical metric.

\subsection*{Distortion estimates}

We use the following spherical version of the classical Koebe Distortion Theorem, see, e.g.~\cite{bowen} for its detailed proof.
\begin{thm}[Spherical Distortion Koebe Theorem]\label{koebe}
Let $0 < r_1, r_2 < \diam_{sph} \clC$. Then there exists a constant $c>0$ depending only on $r_1, r_2$, such that for every spherical disc $D = \DD(z_0, r)$ and every univalent holomorphic map $g: D \to \clC$ with $z_0 \in \clC$, $\diam_{sph} D < r_1$ and $\diam_{sph}(\clC\setminus g(D))>r_2$, if $z_1, z_2 \in \DD(z_0, \lambda r)$ for some $0 < \lambda < 1$, then 
\[
\frac{|g^*(z_1)|}{|g^*(z_2)|} \leq\frac{c}{(1 - \lambda)^4}.
\]
\end{thm}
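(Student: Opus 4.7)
The natural strategy is to reduce the claim to the classical (Euclidean) Koebe distortion theorem by pre- and post-composing $g$ with spherical isometries, i.e.\ rotations of the Riemann sphere. Such isometries are Möbius maps with $|\phi^*| \equiv 1$, so they preserve $|g^*|$ pointwise and any distortion estimate on the normalized map transfers verbatim to $g$.

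I would first pick a spherical isometry $\phi_1$ with $\phi_1(z_0) = 0$. Since the spherical distance from $0$ equals $2\arctan|z|$, the image $\phi_1(D)$ is a Euclidean disc $\D(0, R_0)$ with $R_0 = \tan(r/2)$, bounded above in terms of $r_1$. Next, using $\diam_{sph}(\clC \setminus g(D)) > r_2$ and the triangle inequality, I choose $a \in \clC \setminus g(D)$ with $\dist_{sph}(a, g(z_0)) > r_2/2$, and pick a second omitted value $a'$ with $\dist_{sph}(a, a') > r_2$. Let $\phi_2$ be a spherical isometry sending $a$ to $\infty$, and set $h := \phi_2 \circ g \circ \phi_1^{-1}$. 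Then $h \colon \D(0, R_0) \to \C$ is holomorphic and univalent, $|h(0)|$ is bounded in terms of $r_2$, and $\phi_2(a')$ is an omitted value of modulus bounded in terms of $r_2$. Applying Koebe's one-quarter theorem to $h - h(0)$ against the bounded omitted value $\phi_2(a') - h(0)$ yields a uniform upper bound $|h'(0)| \le M(r_1, r_2)$.

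With this normalization, classical Koebe distortion controls $|h'(w_1)|/|h'(w_2)|$ and gives an explicit upper bound on $|h(w)|$ on the smaller disc $\D(0, \lambda R_0)$. Since $\phi_1, \phi_2$ are spherical isometries, $|g^*(z_i)| = |h^*(\phi_1(z_i))|$, so it remains to bound
\[
\frac{|h^*(w_1)|}{|h^*(w_2)|} = \frac{1+|w_1|^2}{1+|w_2|^2}\cdot \frac{|h'(w_1)|}{|h'(w_2)|}\cdot \frac{1+|h(w_2)|^2}{1+|h(w_1)|^2}
\]
factor by factor and absorb all $r_1, r_2$-dependent constants into $c$.

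The main obstacle is precisely this last bookkeeping step: the middle factor $(1+|h(w_2)|^2)/(1+|h(w_1)|^2)$ can itself grow polynomially in $1/(1-\lambda)$ via the Koebe growth theorem, so the three estimates must be combined carefully to match the stated power of $1/(1-\lambda)$. The whole argument hinges on the dual role of the condition $\dist_{sph}(a, g(z_0)) > r_2/2$: it simultaneously keeps $h(0)$ away from $\infty$ and places the second omitted value $\phi_2(a')$ close enough to $h(0)$ to activate Koebe's one-quarter theorem, producing the uniform bound on $|h'(0)|$ without which no estimate depending only on $r_1, r_2$ could exist.
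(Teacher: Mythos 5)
Your overall framework---normalize by spherical isometries so that the domain becomes a Euclidean disc $\D(0,R_0)$, arrange $\infty$ to be one of the omitted values, and then unfold $|h^*(w_1)|/|h^*(w_2)|$ into the three factors involving $1+|w_i|^2$, $|h'(w_i)|$, and $1+|h(w_i)|^2$---is the right setup. However, there is a genuine gap at exactly the point you flag as ``bookkeeping,'' and as stated the argument does not close.

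If you bound each factor separately, you get too weak an estimate. The Koebe ratio bound for the middle factor gives $|h'(w_1)|/|h'(w_2)|\le (1+\lambda)^4/(1-\lambda)^4$. For the last factor, the one-quarter theorem gives only $R_0|h'(0)|\le 4(M+M')$ (note: $|h'(0)|$ itself is \emph{not} bounded uniformly---it is $O(1/R_0)$, and $R_0$ may be arbitrarily small when $r$ is small; what stays bounded is the product $R_0|h'(0)|$, which fortunately is what the growth theorem actually uses). Then $|h(w_2)|\lesssim (1-\lambda)^{-2}$, so $1+|h(w_2)|^2\lesssim (1-\lambda)^{-4}$, and bounding the last factor by $1+|h(w_2)|^2$ yields a total of $(1-\lambda)^{-8}$, twice the claimed exponent. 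The two bounds cannot both be saturated simultaneously, but your proposal contains no mechanism to exploit that.

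The idea that is missing is to apply Koebe distortion to the \emph{reciprocal} as well. After normalizing so that $g$ omits $\infty$ and a finite point $a'$ with $|a'|\le M'(r_2)$, put $G=g-a'$, so $G:\D(0,R_0)\to\C\setminus\{0\}$ is univalent. One checks that $\frac{1+|G(z)|^2}{1+|g(z)|^2}$ lies between $(2+2M'^2)^{-1}$ and $2+2M'^2$, reducing the problem to bounding $|G^*(z_1)|/|G^*(z_2)|$. Now apply the Koebe distortion bound to \emph{both} $G$ and $1/G$: the first gives $|G'(z_1)|/|G'(z_2)|\le (1+\lambda)^4/(1-\lambda)^4$, and the second gives $\frac{|G'(z_1)|}{|G'(z_2)|}\cdot\frac{|G(z_2)|^2}{|G(z_1)|^2}\le (1+\lambda)^4/(1-\lambda)^4$. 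Combining these with the elementary inequality $\frac{1+b}{1+a}\le\max(1,b/a)$ for $a,b\ge 0$, one gets directly
\[
\frac{|G'(z_1)|}{|G'(z_2)|}\cdot\frac{1+|G(z_2)|^2}{1+|G(z_1)|^2}\le \max\!\left(\frac{|G'(z_1)|}{|G'(z_2)|},\ \frac{|G'(z_1)|}{|G'(z_2)|}\cdot\frac{|G(z_2)|^2}{|G(z_1)|^2}\right)\le\frac{(1+\lambda)^4}{(1-\lambda)^4},
\]
which, together with the bounded first factor $(1+|z_1|^2)/(1+|z_2|^2)\le 1+R_0^2$, gives precisely $c/(1-\lambda)^4$. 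Without this ``apply Koebe to the reciprocal'' step and the $\max(1,b/a)$ observation, the factor-by-factor bounding you propose cannot reach the stated exponent, so the proposal as written is incomplete.
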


We recall the notion of a logarithmic tract, and formulate some distortion estimates which will be used in subsequent sections.
\begin{defn}\label{defn:tract}
Let $U \subset \C$ is an unbounded simply connected domain, such that the boundary of $U$ in $\C$ is a smooth open simple arc and let $R > 0$. If $f:\overline{U} \to \C$ is a continuous map, holomorphic on $U$, such that $|f(z)| = R$ for every $z$ in the boundary of $U$ and $f$ on $U$ is a universal covering of $\{z\in \C: |z| > R\}$, then we call $U$ a \emph{logarithmic tract} of $f$ over $\infty$.
\end{defn}

\begin{rem}\label{rem:log}
If  a map $f \in \B$ is entire or if it has a finite number of poles, then every component of $f^{-1}(V)$, where $V = \{z\in\C: |z| > R\}$ for sufficiently large $R$, is a logarithmic tract of $f$ over $\infty$.
\end{rem}

We shall make use of the following facts (see \cite{stallard, bowen} for the proofs). 

\begin{lem}[{\cite[Corollary 3.7]{bowen}}] \label{lem:|g|}
Let $R, L > 1$. Then there exists a constant $c > 0$ depending only on $R, L$, such that for every logarithmic tract $U \subset \C$ of $f:U \to V$ over $\infty$, where $V = \{z\in\C:|z| > R\}$ and $0 \notin U$, for every $z_1, z_2 \in V$ with $|z_1| \geq |z_2| \geq LR$ and every branch $g$ of $f^{-1}$ in a neighbourhood of $z_1$ $($or $z_2)$, we have
\[
c^{-1}\left(\frac{\ln|z_1|}{\ln|z_2|}\right)^{-4\pi} < \frac{|g(z_1)|}{|g(z_2)|} < c\left(\frac{\ln|z_1|}{\ln|z_2|}\right)^{4\pi} 
\]
for some extension of the branch $g$ to a neighbourhood of $z_2$ $($or $z_1)$.
\end{lem}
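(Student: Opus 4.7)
My approach is to lift the universal covering $f|_U\colon U \to V$ to a biholomorphism with a half-plane and then read off the required inequality from the Koebe $\tfrac{1}{4}$-theorem applied to this lift. Since $f|_U\colon U \to V = \{z \in \C : |z| > R\}$ and $\exp \colon H \to V$, with $H := \{w \in \C : \Re(w) > \ln R\}$, are both holomorphic universal coverings, a standard covering-space argument furnishes a biholomorphism $\varphi \colon U \to H$ with $f|_U = \exp \circ \varphi$. Set $\psi := \varphi^{-1}\colon H \to U$. Every inverse branch $g$ of $f$ on $V$ then has the form $g(z) = \psi(w)$, where $w$ is a branch of $\log z$ with $\Re(w) = \ln|z|$, and extending $g$ along a path in $V$ corresponds to extending the chosen branch of $\log$ along the same path and composing with $\psi$. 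Because $U$ is simply connected and $0 \notin U$, the function $h := \log \circ \psi \colon H \to \C$ is a single-valued holomorphic function with $\Re(h(w)) = \ln|\psi(w)|$ and $h'(w) = \psi'(w)/\psi(w)$.

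The key pointwise estimate comes from applying the Koebe $\tfrac{1}{4}$-theorem to the univalent map $\psi$ on the largest Euclidean disc in $H$ centred at $w$, namely the disc of radius $\Re(w) - \ln R$. Since $\psi$ avoids $0$, this yields
\[
|h'(w)| = \frac{|\psi'(w)|}{|\psi(w)|} \leq \frac{4}{\Re(w) - \ln R}.
\]
Given $z_1, z_2$ with $|z_1| \geq |z_2| \geq LR$, I pick lifts $w_i \in H$ with $e^{w_i} = z_i$ and $\Re(w_i) = \ln|z_i|$, arranged so that $|\Im(w_1) - \Im(w_2)| \leq \pi$, which is achievable by a $2\pi k$-shift of $\Im(w_1)$. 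This choice of $w_1$ realises precisely ``some extension'' of the branch from $z_2$ to $z_1$ as permitted by the lemma. Connecting $w_2$ to $w_1$ by a horizontal segment of length $\ln|z_1| - \ln|z_2|$ followed by a vertical segment of length at most $\pi$, along which $\Re(w) - \ln R \geq \ln L$, and integrating the pointwise bound above gives
\[
|h(w_1) - h(w_2)| \leq 4 \ln \frac{\ln|z_1| - \ln R}{\ln|z_2| - \ln R} + \frac{4\pi}{\ln L}.
\]

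To finish, I use that the hypothesis $|z_i| \geq LR$ makes $\ln|z_i|$ and $\ln|z_i| - \ln R$ comparable up to a factor depending only on $R$ and $L$, so the right-hand side is bounded by $4 \ln(\ln|z_1|/\ln|z_2|) + C(R,L)$. Exponentiating and using $|g(z_i)| = e^{\Re(h(w_i))}$ yields the upper inequality of the lemma, with exponent $4 \leq 4\pi$, which suffices since $\ln|z_1|/\ln|z_2| \geq 1$. The reverse inequality follows by swapping the roles of $z_1$ and $z_2$. The only mildly delicate point in this scheme is the bookkeeping of branches, i.e.\ matching the specific lift $w_1$ selected above to the freedom built into the lemma's phrasing; the analytic input itself is a one-line application of Koebe distortion to the lifted inverse $\psi$.
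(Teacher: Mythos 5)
Your argument is correct. You lift $f|_U$ to logarithmic coordinates via a Riemann map $\psi\colon H\to U$ with $f\circ\psi=\exp$, apply the Koebe $\tfrac14$-theorem to $\psi$ on the largest disc about $w$ inside the half-plane $H$, and integrate the resulting logarithmic-derivative bound along a path in $H$. The pointwise estimate
\[
\left|\frac{\psi'(w)}{\psi(w)}\right|\leq\frac{4}{\Re(w)-\ln R}
\]
does follow from Koebe together with $0\notin U$, the path integral over the horizontal plus vertical segment is computed correctly (the vertical segment costs $\leq 4\pi/\ln L$, using $|z_i|\geq LR$), and the ratio $\bigl(\ln|z_1|-\ln R\bigr)/\bigl(\ln|z_2|-\ln R\bigr)$ is comparable to $\ln|z_1|/\ln|z_2|$ up to the factor $(\ln L+\ln R)/\ln L$ depending only on $R,L$. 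Since $\ln|z_1|/\ln|z_2|\geq 1$, the exponent $4$ you obtain implies the stated exponent $4\pi$, and the two-sided bound follows because you bound $|h(w_1)-h(w_2)|$ rather than just its real part. The branch bookkeeping is also handled properly: choosing the lift $w_1$ modulo $2\pi i$ so that $|\Im w_1-\Im w_2|\leq\pi$ corresponds exactly to continuing $g$ along the projected path in $V$, which is the freedom the lemma grants.

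Since this lemma is cited from the companion paper rather than proved here, I cannot compare line by line, but the source's exponent $4\pi$ strongly suggests its proof is driven by the Eremenko--Lyubich expansion estimate $|F'(w)|\geq(\Re F(w)-\ln R)/(4\pi)$, which obtains the missing-disc input for Koebe from the $2\pi i$-disjointness of the lifted tracts. Your variant replaces that step by the omitted value $0\notin U$, which both simplifies the argument and sharpens the exponent from $4\pi$ to $4$. This is a genuine, if minor, improvement, and it makes the hypothesis $0\notin U$ (stated but otherwise easy to overlook) do visible work in the proof.
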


\begin{lem}[{\cite[Corollary 3.9]{bowen}}]\label{lem:|g^*|} Let $R, L > 1$. Then there exists a constant $c > 0$ depending only on $R, L$, such that for every logarithmic tract $U\subset \C$ of $f:U \to V$ over $\infty$, where $V = \{z\in\C: |z| > R\}$ and $0 \notin U$, for every $z_1, z_2 \in V$ with $|z_1| \geq |z_2| \geq LR$ and every branch $g$ of $f^{-1}$ in a neighbourhood of one of the points $z_1, z_2$, we have
\[
c^{-1} \frac{|z_1|}{|z_2|} \left(\frac{\ln |z_1|}{\ln |z_2|}\right)^{-3} \leq \frac{|g^*(z_1)|}{|g^*(z_2)|} \leq c \frac{|z_1|}{|z_2|} \frac{\ln |z_1|}{\ln |z_2|},
\]
for some extension of the branch $g$.
\end{lem}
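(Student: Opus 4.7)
The plan is to reduce the problem to a distortion estimate for a conformal isomorphism onto a half-plane, and to combine this with Lemma~\ref{lem:|g|}. Since $f:U\to V$ with $V=\{|z|>R\}$ is a universal covering and $\exp:H\to V$ with $H=\{\Re w>\ln R\}$ is also a universal covering, $f$ lifts to a conformal isomorphism $\tilde f:U\to H$ satisfying $f=\exp\circ\tilde f$. Thus locally $g=\tilde f^{-1}\circ\log$ for an appropriate branch of $\log$, and the chain rule yields
\[
g^*(z)=\frac{(1+|z|^2)\,|(\tilde f^{-1})'(\log z)|}{|z|\,(1+|g(z)|^2)}.
\]
For $|z|\geq LR$ we have $(1+|z|^2)/|z|\asymp|z|$ with constants depending only on $R,L$, so the $|z_1|/|z_2|$ factor of the conclusion arises immediately from this coefficient.

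The essential remaining task is to control
\[
\frac{|(\tilde f^{-1})'(w_1)|}{|(\tilde f^{-1})'(w_2)|}\cdot\frac{1+|g(z_2)|^2}{1+|g(z_1)|^2},
\]
where $w_i=\log z_i$. For this, I would apply Koebe's $1/4$-theorem to the univalent map $\tilde f^{-1}$ on the disc $D(w,\Re w-\ln R)\subset H$, obtaining
\[
|(\tilde f^{-1})'(w)|\asymp \frac{\dist(g(z),\partial U)}{\ln(|z|/R)}\asymp \frac{\dist(g(z),\partial U)}{\ln|z|},
\]
the last step valid for $|z|\geq LR$. The hypothesis $0\notin U$ forces $\partial U$ to separate $g(z)$ from $0$, giving $\dist(g(z),\partial U)\leq|g(z)|$. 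To compare derivative values at $w_1,w_2$, one chains Koebe distortion estimates along the hyperbolic geodesic in $H$ between them; since $|\Im w_1-\Im w_2|\leq 2\pi$ for a suitable branch of $\log$ and $\Re w_i\geq\ln(LR)$, the hyperbolic distance $d_H(w_1,w_2)$ is $O(\ln(\ln|z_1|/\ln|z_2|)+1)$, yielding a polynomial bound in $\ln|z_1|/\ln|z_2|$ on the ratio of derivatives.

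The main obstacle is the sharp exponent bookkeeping. Applied naively, combining the Koebe chain with Lemma~\ref{lem:|g|} (whose exponent is $4\pi$) would produce exponents on $\ln|z_1|/\ln|z_2|$ far in excess of the sharp values $1$ and $-3$ in the conclusion. The resolution is to exploit the Koebe $1/4$-theorem in both directions, relating $|(\tilde f^{-1})'(w)|$ simultaneously to the upper bound $|g(z)|$ and to a lower bound $\dist(g(z),\partial U)$ obtained from the Koebe-disc inclusion, and to track how the factors $|g(z_i)|$ pair with the $\ln|z_i|$ factors coming from the half-plane distortion. In particular, on the upper side, $\dist(g(z_1),\partial U)\leq|g(z_1)|$ together with a $\ln|z|$-factor from Koebe already collapses the ratio $|g(z_1)|^2/(1+|g(z_1)|^2)$ against one of the $4\pi$-powers from Lemma~\ref{lem:|g|}, leaving precisely one residual factor of $\ln|z_1|/\ln|z_2|$; the same mechanism, run in the opposite direction and using the lower Koebe bound for $\dist(g(z_2),\partial U)$, produces the exponent $-3$ below. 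A final application of Lemma~\ref{lem:|g|} (and the smoothness of $\partial U$ from Definition~\ref{defn:tract} to ensure the Koebe discs do not degenerate) then closes the estimate.
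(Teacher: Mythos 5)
The statement is cited from \cite[Corollary 3.9]{bowen} and the present paper does not reprove it, so there is no in-paper proof to compare against; the assessment below concerns the internal soundness of your argument.

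Your set-up is correct: lifting $g=\tilde f^{-1}\circ\log$, the formula $g^*(z)=(1+|z|^2)\,|(\tilde f^{-1})'(\log z)|\,/\,(|z|(1+|g(z)|^2))$ holds, the factor $(1+|z|^2)/|z|\asymp|z|$ gives the $|z_1|/|z_2|$ term, and by Koebe's $1/4$-theorem one has $|(\tilde f^{-1})'(w)|\asymp\dist(g(z),\partial U)/\ln|z|$ with $\dist(g(z),\partial U)\le|g(z)|$ from $0\notin U$. You also correctly identify the decisive difficulty. But the mechanism you offer to resolve it does not actually close the estimate.

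The problem is concrete. After normalisation the quantity to control is
\[
\frac{1+|g(z_2)|^2}{1+|g(z_1)|^2}\cdot\frac{|(\tilde f^{-1})'(w_1)|}{|(\tilde f^{-1})'(w_2)|}.
\]
The half-plane Koebe distortion estimate bounds the second factor between $c^{-1}(\ln|z_1|/\ln|z_2|)^{-3}$ and $c\,\ln|z_1|/\ln|z_2|$, and these are exactly the exponents claimed in the lemma. So the proof must show, in effect, that the first factor is essentially bounded — or, more delicately, that its excursion away from $1$ is exactly compensated by the departure of the derivative ratio from its Koebe extremes. But the only quantitative control you have on $|g(z_1)|/|g(z_2)|$ is Lemma~\ref{lem:|g|}, whose exponent is $4\pi\approx 12.6$; squaring it, the ratio $(1+|g(z_2)|^2)/(1+|g(z_1)|^2)$ can a priori be as large or small as $(\ln|z_1|/\ln|z_2|)^{\pm 8\pi}$, which is nowhere near the target window $[(\ln|z_1|/\ln|z_2|)^{-3},\,\ln|z_1|/\ln|z_2|]$. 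The phrase about the factor $|g(z_1)|^2/(1+|g(z_1)|^2)$ "collapsing against one of the $4\pi$-powers" is not an argument: that factor is automatically $\le 1$ and collapses nothing, while the inequality $\dist(g(z),\partial U)\le|g(z)|$ is one-sided and introduces into any bound a factor $|g(z_i)|/\dist(g(z_i),\partial U)$ that is unbounded a priori. Nothing in the sketch supplies the missing correlation between the size of $|g(z_i)|$ and the deviation of $|(\tilde f^{-1})'(w_i)|$ from its Koebe bound, which is precisely what is needed.

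A more promising route, which your decomposition does not exploit, is the \emph{second} logarithmic change of variable. Since $U$ is simply connected and $0\notin U$, the branch $\Phi:=\log\circ\,\tilde f^{-1}:H\to\log U$ is univalent, and a short computation gives the cleaner identity
\[
g^*(z)=\frac{\cosh(\Re w)}{\cosh(\Re\Phi(w))}\,|\Phi'(w)|,\qquad w=\log z.
\]
Here $\log U$ has vertical width strictly less than $2\pi$ (this is where the tract structure enters beyond $0\notin U$), so $\dist(\Phi(w),\partial(\log U))\le\pi$; together with Koebe this is essentially the Eremenko--Lyubich expansion estimate, and the half-plane distortion theorem applied to $\Phi$ already yields the exponents $-3$ and $+1$ on the $|\Phi'|$ ratio. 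One would still have to control the $\cosh$ quotient, so this is not a complete proof either — but it at least isolates the correct univalent map on which to run Koebe, which your argument, applied to $\tilde f^{-1}$ instead of $\Phi$, does not.

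In summary: the reduction and the Koebe ingredient are right, but the proposal is incomplete at the decisive step, and the claimed cancellation is asserted rather than demonstrated.
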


\subsection*{Pressure for transcendental maps} 

\begin{defn}\label{defn:pressure} The \emph{topological pressure} function with respect to the spherical metric is defined as
\[
P(f,t,z_0)=\lim_{n\to\infty}\frac 1 n\ln\sum_{w\in
f^{-n}(z_0)}|(f^n)^*(w)|^{-t}
\]
for $z_0 \in \C$ and $t > 0$, assuming that the limit exists (possibly infinite).
\end{defn}

We use the following crucial result, proved in \cite{bowen}, establishing the existence of the pressure function and Bowen's formula for transcendental meromorphic maps.

\begin{thm}[{\cite[Theorems A and B]{bowen}}]\label{thm:pressure}
For every transcendental entire or meromorphic map $f$ in the class $\mathcal S$ and every $t > 0$ the topological pressure $P(f,t) = P(f,t,z_0)$ exists $($possibly equal to $+\infty)$ and is independent of $z_0 \in \C$ up to an exceptional set of Hausdorff dimension zero $($consisting of points quickly approximated by the forward orbits of singular values of $f)$. We have
\[
P(f, t) = P_{hyp}(f, t),
\] 
where $P_{hyp}(f, t)$ is the supremum of the pressures $P(f|_X, t)$
over all transitive isolated conformal repellers $X \subset J(f)$. The
function $t \mapsto P(f, t)$ is non-increasing and convex when it is
finite and satisfies $P(f,2) \leq 0$.
The following version of Bowen's formula holds:
\[
\dim_H J_r(f)  = \dim_{hyp} J(f) = t_0,
\]
where $t_0 = \inf \{t > 0: P(f,t) \leq 0\}$.

Moreover, analogous results hold for every non-exceptional transcendental entire or meromorphic map $f$ in the class $\B$, such that $J(f) \setminus \overline{\mathcal P(f)} \neq \emptyset$ $($in particular, for every hyperbolic map$)$ and $z$ in $J(f) \setminus \overline{\mathcal P(f)}$, which is an open dense
subset of $J(f)$.

If $f$ is hyperbolic, then $P(f, t) > 0$ $($possibly equal to $+\infty)$ for every $0 < t < t_0$ and $P(f, t) < 0$ for every $t > t_0$.
\end{thm}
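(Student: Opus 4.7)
The plan is to treat the theorem as a package — existence of the pressure limit, its independence of $z_0$ up to a dimension-zero exceptional set, the identity $P(f,t)=P_{hyp}(f,t)$, the analytic properties (convexity, monotonicity, $P(f,2)\le 0$), Bowen's formula, and the hyperbolic strict monotonicity — and to reduce each part to a thermodynamic-formalism computation on conformal repellers via the spherical Koebe theorem (Theorem~\ref{koebe}). The structural input is that, by hypothesis, $J(f)\setminus\overline{\PP(f)}$ is nonempty and (using forward invariance of $\overline{\PP(f)}$ and transitivity of $f|_{J(f)}$) open dense in $J(f)$, so one can fix a base point $z_0$ and a radius $r>0$ with $\DD(z_0,2r)\cap\overline{\PP(f)}=\emptyset$; then every inverse branch $g_w$ of $f^n$ at $z_0$ extends univalently to $\DD(z_0,2r)$ with spherical-Koebe-controlled distortion on $\DD(z_0,r)$, uniformly in $n$ and $w$.

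First I would extract the limit and prove independence of the base point. Writing $S_n(t):=\sum_{w\in f^{-n}(z_0)}|(f^n)^*(w)|^{-t}$, chaining inverse branches together with bounded distortion turns the spherical chain rule into an almost-product relation
\[
S_{n+m}(t)\le C\,S_n(t)\,S_m(t),
\]
so Fekete's lemma applied to $\log S_n(t)$ produces the limit $P(f,t,z_0)\in[-\infty,+\infty]$. For two base points $z_0,z_0'\in J(f)\setminus\overline{\PP(f)}$, transitivity yields some $w_\star\in f^{-N}(z_0')$ inside $\DD(z_0,r)$, and Koebe gives $S_n(z_0',t)\asymp|(f^N)^*(w_\star)|^{-t}\,S_{n-N}(z_0,t)$ up to a bounded factor, equating the two exponential rates. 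For a general $z_0\in\C$, the argument fails only when the forward orbit of $z_0$ is shadowed exponentially fast by an orbit originating in $\Sing(f)$; since $\Sing(f)$ is small (finite in class $\SSS$, bounded in class $\B$), a Borel--Cantelli covering estimate confines the set of such bad $z_0$ to Hausdorff dimension zero.

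Next I would prove $P(f,t)=P_{hyp}(f,t)$ and deduce the analytic properties. The inequality $P_{hyp}\le P$ is immediate since preimages inside a repeller $X$ form a subset of $f^{-n}(z_0)$. For the reverse, I would partition the preimages at step $n$ according to the sequence of open sets visited in a finite cover of a compact $K\subset J(f)\setminus\overline{\PP(f)}$ that also avoids a neighborhood of $\infty$, discard the branches that spend a positive density of time outside $K$ (losing only $e^{o(n)}$ of the partition sum by a spherical-area argument), and identify the remaining preimages as sitting inside a topologically transitive isolated conformal repeller $X_K\subset J(f)$ with pressure approximating $P(f,t)$ arbitrarily well as $K$ is enlarged. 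Convexity of $t\mapsto P(f,t)$ is then the log-sum-exponential convexity of $\log S_n(\cdot)$; monotonicity follows from the $P_{hyp}$ representation and the fact that the pressure on any uniformly expanding repeller is non-increasing in $t$. The bound $P(f,2)\le 0$ is the spherical-area estimate
\[
r^2\sum_{w\in f^{-n}(z_0)}|(f^n)^*(w)|^{-2}\lesssim \operatorname{area}_{sph}(\clC),
\]
coming from univalence of each $g_w$ on $\DD(z_0,r)$ and Koebe.

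Finally I would deduce Bowen's formula and the hyperbolic strict monotonicity. For $t>t_0$, so $P(f,t)<0$, every $z\in J_r(f)$ with radial return sequence $n_k\to\infty$ is covered by a spherical disc of radius $\asymp 1/|(f^{n_k})^*(w)|$ associated to the radial inverse branch, and the total $t$-content is $\lesssim S_{n_k}(t)\to 0$, giving $\dim_H J_r(f)\le t_0$. For the converse, each $X_K\subset J_r(f)$ satisfies $\dim_H X_K=t_{X_K}\to t_0$ by the classical Bowen formula for conformal repellers; equality with $\dim_{hyp}J(f)$ is Rempe's theorem. In the hyperbolic case, uniform expansion $|(f^n)^*|\ge C\lambda^n$ on $J(f)$ gives $S_n(t)\le S_n(s)\cdot C^{s-t}\lambda^{-n(t-s)}$ for $t>s$, yielding strict monotonicity and both sign statements. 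The main technical obstacle throughout is the repeller construction: one must show that the preimages contributing to $P(f,t)$ do not concentrate near $\overline{\PP(f)}$ or escape toward $\infty$, and it is precisely the class $\SSS$ / non-exceptional class $\B$ hypothesis together with $J(f)\setminus\overline{\PP(f)}\neq\emptyset$ that supplies the geometric room to carry this out.
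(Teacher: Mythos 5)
This theorem is not proved in the paper at all: it is quoted verbatim from the authors' earlier work \cite{bowen} (Theorems A and B there), and the present paper only uses it as imported input. So there is no internal proof to compare against; what you have written is a blind reconstruction of the argument of \cite{bowen}. In broad strokes your outline does track that reference's strategy — spherical Koebe distortion on a disc avoiding $\overline{\PP(f)}$, the hyperbolic pressure $P_{hyp}$ as the bridge to conformal repellers, the spherical-area argument for $P(f,2)\le 0$, the covering argument for $\dim_H J_r(f)\le t_0$, and a Borel--Cantelli estimate to confine the bad base points to a dimension-zero set.

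There is, however, a genuine gap at the very first step. The claimed almost-submultiplicativity $S_{n+m}(t)\le C\,S_n(t)\,S_m(t)$ requires, after grouping preimages as $S_{n+m}(t,z_0)=\sum_{w\in f^{-m}(z_0)}|(f^m)^*(w)|^{-t}S_n(t,w)$, the uniform bound $S_n(t,w)\le C\,S_n(t,z_0)$ over \emph{all} $w\in f^{-m}(z_0)$. This fails: preimages $w$ of arbitrarily large modulus occur, and for such $w$ one has $S_1(t,w)\gtrsim |w|^t/(\ln|w|)^{3t}$ (this is exactly Lemma~\ref{lem:v} of the present paper), so $S_n(t,w)$ is not comparable to $S_n(t,z_0)$ and Fekete's lemma does not apply. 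This is precisely why \cite{bowen}, and Proposition~\ref{strange} here, work with the truncated sums $S_n^{\D(r)}$ and need the delicate estimates of Lemmas~\ref{lem:compact} and \ref{lem:compact2} to control the contribution of preimages near infinity; the existence of the limit defining $P(f,t)$ is established there through the identification with $P_{hyp}(f,t)$ rather than by subadditivity. Your later steps (the repeller construction, discarding branches that leave a compact set) are the right ingredients, but as written the existence of the limit is assumed via an inequality that is false, so the logical order of the argument needs to be inverted: first show $\limsup$ and $\liminf$ of $\frac1n\ln S_n^{\D(r)}$ are squeezed by $P_{hyp}$, then remove the truncation.
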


We refer to the points $z_0$ from Theorem~\ref{thm:pressure} as to GPS points (\emph{good pressure starting points}). 

Following \cite{bowen}, we denote
\[
S_n(t,z) = \sum_{w\in f^{-n}(z)}\frac{1}{|(f^n)^*(w)|^t}, \qquad
S_n^A(t,z) = \sum_{w\in f^{-n}(z) \cap A}\frac{1}{|(f^n)^*(w)|^t}
\]
for a set $A \subset \C$ and $n > 0$.

\section{Introductory results}\label{sec:intro_results}

In this section we present some results concerning the properties of the radial Julia set, conformal measures and topological pressure. Note that some facts are not used in the subsequent sections but are of independent interest. In all results of this section we assume that $f$ is a transcendental meromorphic map on the complex plane.

The first two propositions study properties of the radial Julia set $J_r(f)$ and the set of non-escaping points in the Julia set. 

\begin{prop}\label{prop:J_r}

\

\begin{itemize}
\item[$(a)$]
If $f$ has a finite number of poles, then $J_r(f) \subset J(f) \setminus (I(f) \cup \bigcup_{n=1}^\infty f^{-n}(\infty))$.\\In particular, if $f$ is entire, then $J_r(f) \subset J(f) \setminus I(f)$.
\item[$(b)$] If $f$ is hyperbolic, then $J(f) \setminus (I(f) \cup \bigcup_{n=1}^\infty f^{-n}(\infty)) \subset J_r(f)$.\\
In particular, if $f$ is hyperbolic entire, then $J_r(f) = J(f) \setminus I(f)$.
\end{itemize}
\end{prop}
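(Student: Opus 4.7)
That $J_r(f)$ is disjoint from $\bigcup_{n\ge 1} f^{-n}(\infty)$ is immediate from the definition, which requires every $f^n(z)$ to be defined and hence finite. To show $J_r(f) \cap I(f) = \emptyset$, I argue by contradiction. If $z \in J_r(f) \cap I(f)$, then along the radial subsequence $n_k$ we have $\dist_{sph}(f^{n_k}(z), \infty) \to 0$, so for all large $k$ the point $\infty$ lies inside $\DD(f^{n_k}(z), r)$ and the univalent branch $g_k$ of $f^{-n_k}$ is defined at $\infty$, with $f^{n_k}(g_k(\infty)) = \infty$.

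Two cases must be analyzed. \emph{(i)} If $g_k(\infty) = \infty$, then on the punctured neighbourhood $g_k(\DD(f^{n_k}(z),r)) \setminus \{\infty\}$ of $\infty$ the inverse $f^{n_k}$ is univalent with image contained in $\DD(f^{n_k}(z), r) \setminus \{\infty\}$, a set omitting a region of positive spherical diameter; this contradicts the big Picard theorem applied at the essential singularity of $f^{n_k}$ at $\infty$. \emph{(ii)} If $g_k(\infty) = p_k \in \C$, then univalence of $g_k$ at $\infty$ forces $p_k$ to be a simple meromorphic pole of $f^{n_k}$, so $f^{n_k-1}(p_k)$ lies in the pole set of $f$; here I use that $f$ has finitely many poles to pigeonhole $f^{n_k-1}(p_k) = q$ constant on a subsequence. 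The composition $\omega_k := f^{n_k-1} \circ g_k$ is then a univalent branch of $f^{-1}$ on $\DD(f^{n_k}(z), r)$ with $\omega_k(\infty) = q$, namely the canonical local inverse of $f$ at the simple pole $q$, with spherical derivative $1/|f^*(q)|$ at $\infty$. The discs $\DD(f^{n_k}(z), r)$ converge Hausdorff to $\DD(\infty, r)$ and the $\omega_k$ are restrictions of this one $f^{-1}$-branch, so a Koebe distortion / normal families argument yields a uniform bound on the images $W_k := \omega_k(\DD(f^{n_k}(z), r)) \subset \C$. But $\omega_k(f^{n_k}(z)) = f^{n_k-1}(z) \in W_k$ tends to $\infty$ since $z \in I(f)$, a contradiction. \emph{The main obstacle is case (ii)}: one must rigorously establish the coherence of the $\omega_k$ as restrictions of a single $f^{-1}$-branch and the uniform boundedness of their images; the finite-poles hypothesis enters essentially through the pigeonhole step.

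\textbf{Part (b).} Hyperbolicity yields $\delta := \dist_{sph}(J(f), \overline{\PP(f)}) > 0$. Given $z \in J(f) \setminus (I(f) \cup \bigcup_n f^{-n}(\infty))$, every forward iterate $f^n(z)$ lies in $J(f)$, so the disc $\DD(f^n(z), \delta)$ is disjoint from $\overline{\PP(f)} \supset \Sing(f)$ and univalent branches of $f^{-1}$ are defined on it. Since $\PP(f)$ is forward-invariant, every successive preimage disc in the backward pull-back ending at $z$ remains disjoint from $\PP(f)$, so the inductive composition is univalent at every stage; this produces a univalent branch of $f^{-n}$ on $\DD(f^n(z), \delta)$ sending $f^n(z)$ to $z$ for every $n$. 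Choosing any sequence $n_k \to \infty$ places $z$ in $J_r(f)$ with uniform radius $\delta$.
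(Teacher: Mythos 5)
Your argument follows the same route as the paper's. For part (a), both proofs hinge on pigeonholing $\omega_k(\infty)$ over the finitely many poles of $f$. A few streamlining remarks: your case (i) is vacuous, since a branch $g_k$ of $f^{-n_k}$ on a disc containing $\infty$ can never satisfy $g_k(\infty)=\infty$ (that would force $f^{n_k}(\infty)=\infty$, but $f^{n_k}$ has an essential singularity at $\infty$). The coherence concern you flag in case (ii) is genuine but closes quickly: a holomorphic \emph{injective} branch $\omega_k$ of $f^{-1}$ defined on a full spherical disc about $\infty$ with $\omega_k(\infty)=q$ forces $q$ to be a \emph{simple} pole and $\omega_k$ to be the unique local inverse of $f$ at $q$ near $\infty$; once the $\omega_k$ are thus a single fixed branch $\omega$ on a fixed neighbourhood of $\infty$, the contradiction is immediate from continuity, $f^{n_k-1}(z)=\omega(f^{n_k}(z))\to\omega(\infty)=q\in\C$, and no Koebe or normal-families argument is needed. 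Your part (b) is correct and slightly more direct than the paper's: the paper passes to a bounded subsequence $n_k$ and invokes local comparability of the spherical and Euclidean metrics, whereas your use of $\delta=\dist_{sph}(J(f),\overline{\PP(f)})>0$ (which is positive because $J(f)\cup\{\infty\}$ and $\overline{\PP(f)}$ are disjoint compacta in $\clC$) gives the inverse branch on $\DD(f^{n}(z),\delta)$ for \emph{every} $n$ at once, with the forward-invariance of $\overline{\PP(f)}$ handling the pull-back exactly as you describe.
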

\begin{proof} First, note that by definition, $J_r(f) \cup (I(f)\cap J(f)) \subset J(f) \setminus \bigcup_{n=1}^\infty f^{-n}(\infty)$.

To show (a), assume that $f$ has a finite number of poles and suppose that $z \in J_r(f) \cap I(f)$. Then there exists a sequence $n_k \to \infty$ and $r > 0$, such that the branch $g_k$ of $f^{-n_k}$ sending $f^{n_k}(z)$ to $f^{n_k-1}(z)$ is defined on $\DD(f^{n_k}(z), r)$. Since $z \in I(f)$, we have $\infty \in \DD(f^{n_k}(z), r/2)$ for large $k$, which implies that $g_k$ is defined on $\DD(\infty, r/2)$, sending $\infty$ to some pole $p_k$ of $f$. Since the number of poles is finite, passing to a subsequence we can assume that $p_k \equiv p$. Then $f^{n_k-1}(z) = g_k(f^{n_k}(z))$ is in a small neighbourhood of the pole $p$ for all large $k$, which contradicts $z \in I(f)$. 

To prove (b), suppose that $f$ is hyperbolic and take $z \in J(f) \setminus (I(f) \cup \bigcup_{n=1}^\infty f^{-n}(\infty))$. Then there exist a sequence $n_k \to \infty$ and $R > 0$, such that $|f^{n_k}(z)| < R$ for all $k$. By the definition of hyperbolicity and the fact that the spherical and Euclidean metric are comparable on compact sets in $\C$ we conclude that there exists $r > 0$ such that $\DD(f^{n_k}(z), r) \cap \overline{\PP(f)} = \emptyset$ for all $k$, which gives $z \in J_r(f)$. 
\end{proof}

\begin{prop}\label{prop:bd} 

\

\begin{itemize}
\item[$(a)$] If $J(f)\neq \C$ $($which holds if and only if $J(f)$ has empty interior$)$, then the $2$-dimensional Lebesgue measure of $J_r(f)$ is zero.

\item[$(b)$] If $f$ is hyperbolic, then the $2$-dimensional Lebesgue measure of $J(f) \setminus I(f)$ is zero.
\end{itemize}
\end{prop}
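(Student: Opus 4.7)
I would prove (b) as a short corollary of (a). Hyperbolicity forces $\overline{\PP(f)}\subset F(f)$ (it is bounded and disjoint from $J(f)$), so $F(f)\neq\emptyset$ and $J(f)\neq\C$. By Proposition~\ref{prop:J_r}(b),
\[
J(f)\setminus I(f)\subset J_r(f)\cup \bigcup_{n\geq 1} f^{-n}(\infty),
\]
and the last union is a countable set of Lebesgue measure zero. Combined with $\Leb(J_r(f))=0$ from (a) this gives (b).

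For (a), the equivalence ``$J(f)\ne\C$ iff $\inter J(f)=\emptyset$'' is the standard blowing-up property of Julia sets of meromorphic maps, and I would merely cite it. The content of (a) is $\Leb(J_r(f))=0$, which I would prove by density-point contradiction. Suppose $\Leb(J_r(f))>0$ and pick a Lebesgue density point $z_0\in J_r(f)$; since $J_r(f)\subset J(f)$, the density of the open set $F(f)$ at $z_0$ is $0$. By the definition of the radial Julia set, choose $r>0$ and $n_k\to\infty$ together with inverse branches $g_k$ of $f^{-n_k}$ on $\DD(f^{n_k}(z_0),r)$ sending $f^{n_k}(z_0)$ to $z_0$. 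Set $\lambda_k=|g_k^*(f^{n_k}(z_0))|$ and $U_k=g_k(\DD(f^{n_k}(z_0),r/2))$. The spherical Koebe theorem (Theorem~\ref{koebe}) gives bounded distortion of $g_k$ on $\DD(f^{n_k}(z_0),r/2)$ and shows that $U_k$ is a ``regular'' spherical neighbourhood of $z_0$ of diameter comparable to $\lambda_k$.

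The first key step is $\lambda_k\to 0$. If not, along a subsequence the sets $U_k$ contain a fixed spherical disk $\DD(z_0,\delta)$, on which $f^{n_k}$ is defined with image in $\DD(f^{n_k}(z_0),r/2)$. Passing to a further subsequence with $f^{n_k}(z_0)\to z_*\in\clC$, all $f^{n_k}$ eventually omit a fixed spherical ball (disjoint from a slight enlargement of $\DD(z_*,r/2)$), so Montel's theorem forces $\{f^{n_k}\}$ to be normal on $\DD(z_0,\delta)$, contradicting $z_0\in J(f)$. With $\lambda_k\to 0$, the Lebesgue density theorem applied to the regular shrinking sets $U_k$ yields $\Leb(F(f)\cap U_k)/\Leb(U_k)\to 0$; transporting this via $g_k$, using the change-of-variables formula, the bounded distortion, and the total invariance $g_k^{-1}(F(f))=F(f)$, gives
\[
\frac{\Leb(F(f)\cap\DD(f^{n_k}(z_0),r/2))}{\Leb(\DD(f^{n_k}(z_0),r/2))}\longrightarrow 0.
\]
Using the same subsequence with $f^{n_k}(z_0)\to z_*$, for every $r'<r/2$ we have $\DD(z_*,r')\subset\DD(f^{n_k}(z_0),r/2)$ eventually, hence $\Leb(F(f)\cap\DD(z_*,r'))=0$; openness of $F(f)$ in $\C$ then forces $\DD(z_*,r/2)\cap\C\subset J(f)$, contradicting the empty interior of $J(f)$.

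The only delicate step is the normal-families argument for $\lambda_k\to 0$: one has to extract a common omitted spherical ball for the whole subsequence $\{f^{n_k}\}$ in order to invoke Montel, rather than omit values tied to individual $k$. Once $\lambda_k\to 0$ is secured, the remaining pieces are standard applications of the Lebesgue density theorem, spherical Koebe distortion, and the openness of $F(f)$.
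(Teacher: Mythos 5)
Your overall strategy for (a) is the same density--point contradiction the paper uses, with the transport run in the opposite direction: the paper pulls ``Fatou holes'' (coming from the empty interior of $J(f)$) back via $g_k$ to tiny neighbourhoods of $z_0$ and contradicts density there, while you push the vanishing density of $F(f)$ at $z_0$ forward to the limit point $z_*$ and contradict empty interior there. Both routes rest on the same ingredients (spherical Koebe, shrinking diameters, $J(f)\neq\C$), and your derivation of (b) from (a) and Proposition~\ref{prop:J_r} is exactly the paper's.

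There is, however, a genuine gap in the step you already flagged as delicate, and it is not the one you identified. The implication ``all $f^{n_k}$ omit a fixed spherical ball on $\DD(z_0,\delta)$ $\Rightarrow$ $\{f^{n_k}\}$ is normal there $\Rightarrow$ contradiction with $z_0\in J(f)$'' is a non sequitur at the last arrow. Membership of $z_0$ in $J(f)$ means the \emph{whole} family $\{f^n\}_{n\geq 1}$ fails to be normal near $z_0$; normality of one particular subsequence $\{f^{n_k}\}$ is entirely compatible with that and yields no contradiction. So Montel cannot, by itself, give $\lambda_k\to 0$ here. The standard correct argument (which is what the paper implicitly invokes by citing \cite[Lemma~3.12]{bowen}) uses the density of repelling periodic points in $J(f)$ together with expansion: if $\lambda_k\not\to 0$, then along a subsequence $U_k\supset\DD(z_0,\delta)$, hence $U_k$ contains a repelling periodic point $p$ of some period $q$; bounded distortion of $g_k$ on $D_k$ forces $|(f^{n_k})^*(p)|\asymp 1/\lambda_k$ to stay bounded, whereas writing $n_k=a_k q+b_k$ and using $|(f^{q})^*(p)|>1$ gives $|(f^{n_k})^*(p)|\to\infty$ --- a contradiction. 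Once $\lambda_k\to 0$ is obtained this way, the remainder of your argument (regular shrinking neighbourhoods, the density theorem, invariance of $F(f)$, change of variables with bounded distortion, and the openness of $F(f)$ at $z_*$) goes through.
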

\begin{proof} To prove the assertion (a), suppose the $2$-dimensional Lebesgue measure of $J_r(f)$ is positive and take a density point $z \in J_r(f)$. Then there exists a sequence $n_k \to \infty$ and $r > 0$, such that the branch $g_k$ of $f^{-n_k}$ sending $f^{n_k}(z)$ to $z$ is defined on $\DD(f^{n_k}(z), r)$. Let $D_k = \DD(f^{n_k}(z), r/2)$. Then $\diam_{sph} D_k \le \diam_{sph} \clC / 2$ and $\clC \setminus g_k(D_k)$ contains the set $\Sing(f) \cup \{\infty\}$, which has at least two elements and hence positive spherical diameter. Therefore, by Theorem~\ref{koebe}, the distortion of $g_k$ on $D_k$ is bounded by a constant independent of $k$. Since $z \in J(f)$, a standard normality argument (see e.g. the proof of \cite[Lemma 3.12]{bowen}) gives 
\[
d_k = \diam g_k(D_k) \to 0 \quad \text{as} \quad k \to \infty.
\]
As $J(f)$ has empty interior, there exists an $\varepsilon > 0$, such that every spherical disc of radius $r/2$ contains a spherical disc of radius $\varepsilon$, which is disjoint from $J(f)$. Consequently, by the bounded distortion of $g_k$ on $D_k$ and since the spherical and Euclidean metric are comparable on compact sets in $\C$, the disc $\D(z, d_k)$ contains a disc of radius $c d_k$ disjoint from $J(f)$ (in particular, from $J_r(f)$), for some fixed $c > 0$. Since $d_k \to 0$, this contradicts the fact that $z$ is the density point of $J_r(f)$ and proves~(a).

To proof the assertion (b), it is enough to use (a) and Proposition~\ref{prop:J_r}.
\end{proof}

The next proposition describes some properties of conformal measures.

\begin{prop}\label{prop:support}
If $m_t$ is a $t$-conformal measure for some meromorphic map $f$ and $t > 0$, then either $\nu$ is positive on open sets in $J(f)$ or $m_t$ is supported on the set of $($at most two$)$ Picard exceptional values of $f$. 
\end{prop}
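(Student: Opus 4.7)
The strategy is to prove the standard dichotomy by combining the blowup property of $J(f)$ with conformality. Write $E \subset \clC$ for the Picard exceptional set of $f$; by Picard's great theorem $|E| \le 2$. I will show that if $m_t$ is not supported on $E$, then $m_t(U) > 0$ for every open $U \subset \C$ with $U \cap J(f) \neq \emptyset$, which is the first alternative.

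Under the assumption, choose $z_0 \in \supp(m_t) \setminus E$; in particular every neighbourhood of $z_0$ has positive $m_t$-mass. Fix an open set $U$ with $U \cap J(f) \neq \emptyset$. The family $\{f^n|_U\}_{n \ge 1}$ is not normal, so by Montel's theorem (in the version for meromorphic families omitting three values in $\clC$) combined with Picard, $\bigcup_{n \ge 0} f^n(U)$ omits at most two points of $\clC$, all of them Picard exceptional for $f$. Since $z_0 \notin E$, there exist $n \ge 0$ and $u \in U$ with $f^n(u) = z_0$. Applying the same reasoning to every open subset of $U$ that meets $J(f)$ shows that iterated preimages of $z_0$ are in fact dense in $U \cap J(f)$; since the critical points of $f^n$ and the polar sets $U \cap f^{-k}(\infty)$ for $1 \le k \le n$ are countable in $\C$, we may replace $u$ if necessary to ensure $(f^n)^*(u) \in (0, \infty)$.

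The inverse function theorem then yields a relatively compact neighbourhood $N$ of $u$ with $\overline{N} \subset U$ on which $f^n$ is univalent, and such that $f^n(\overline{N})$ is a compact neighbourhood of $z_0$ in $\C$. Because $z_0 \in \supp(m_t)$ we have $m_t(f^n(N)) > 0$. Iterating the conformality identity of Definition~\ref{def:conformal} $n$ times along the univalent branch $f^n|_N$ (using that for injective $f|_A$ the identity $\int_{f(A)} h\, dm_t = \int_A h\circ f\,|f^*|^t dm_t$ holds and, by induction, yields $m_t(f^n(A)) = \int_A |(f^n)^*|^t\, dm_t$ whenever $f^n|_A$ is injective), we obtain
\[
m_t(f^n(N)) \;=\; \int_N |(f^n)^*(z)|^t\, dm_t(z) \;\le\; \Bigl(\sup_{\overline N}|(f^n)^*|\Bigr)^t\, m_t(N).
\]
The supremum is finite because $f^n$ is holomorphic on $\overline N$ and maps into the bounded set $f^n(\overline{N}) \subset \C$. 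Hence $m_t(N) > 0$, and a fortiori $m_t(U) > 0$, as required.

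The most delicate point I anticipate is the selection of a regular preimage $u$ of $z_0$ at which $(f^n)^*$ is finite and non-zero; this balances the density of backward orbits of a non-exceptional point (a consequence of Montel iterated on arbitrary sub-open sets of $U$) against the countability of the critical sets and polar sets of each iterate. Everything else is a routine combination of conformality, the inverse function theorem, and the definition of support.
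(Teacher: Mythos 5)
Your proof runs in the contrapositive direction to the paper's. The paper starts from a ball $B$ centred on $J(f)$ with $m_t(B)=0$, pushes the zero mass forward using conformality (partitioning $B$ into finitely many Borel pieces on which each $f^n$ is injective), and concludes from the blowup property and a backward-invariance argument that $m_t$ must sit on the Picard exceptional set. You instead assume $m_t$ is not supported on the exceptional set, pick a base point $z_0\in\supp(m_t)\setminus E$, map any open $U$ forward onto a neighbourhood of $z_0$, and pull positivity back. Both directions are valid and use the same two ingredients (blowup property of $J(f)$ and iterated conformality), so this is a genuine but minor variant rather than a fundamentally different proof.

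There is, however, a gap in the step you yourself flag as delicate: the selection of a preimage $u$ with $(f^n)^*(u)\in(0,\infty)$. You argue that the iterated preimages of $z_0$ are dense in $U\cap J(f)$ while the critical points of the relevant iterates and the pre-polar sets are countable, so a regular preimage can be found. But the backward orbit $\bigcup_n (f^{-n}(z_0)\cap U)$ is \emph{itself} countable, so comparing it against another countable set yields nothing; density of a countable set in an uncountable one does not let you dodge a countable obstacle set. (The pre-polar issue is in fact vacuous here: if $f^n(u)=z_0\in\C$ then no intermediate iterate hits $\infty$, so $f^n$ is automatically holomorphic near $u$; it is only the criticality of $u$ that is problematic, and the countability argument does not resolve it.) The fix is to drop the injectivity requirement entirely: even if $u$ is a critical point of $f^n$, the open mapping theorem still gives that $f^n(N)$ is a neighbourhood of $z_0$, so $m_t(f^n(N))>0$; partition $N$ into finitely many Borel pieces $N_i$ on which $f^n$ is injective and apply the conformality identity piecewise to get
\[
0<m_t(f^n(N))\le\sum_i m_t(f^n(N_i))=\sum_i\int_{N_i}|(f^n)^*|^t\,dm_t\le\Bigl(\sup_{\overline N}|(f^n)^*|\Bigr)^{t}m_t(N),
\]
whence $m_t(N)>0$. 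This partitioning is exactly the device the paper uses, and it removes the need for an inverse-function-theorem branch and hence for the non-criticality of $u$ altogether.
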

\begin{proof}
Let $z\in J(f)$ and $B=\D(z,r)$ for some $r>0$. We consider separately two cases. First, assume that all iterates $f^n$ are defined in $B$. Since the family $\{f^n|_B\}_{n>0}$ is not normal, the union $\bigcup_{n >0} f^n(B)$ covers the whole sphere $\clC$ with at most two points $\{p_1,p_2\}$  excluded. If $m_t(B)=0$ then $m_t(f^n(B))=0$ for every $n>0$. This fact follows easily from the formula \eqref{conformality} and from the fact that the disc $B$ can be divided into a finite union of Borel subsets on which the map $f^n$ is injective.

In the second case, $f^n(B)$ contains some pole of the function $f$, so $f^{n+1}(B)$ contains a neighbourhood of $\infty$. By Picard's/Nevanlinna Theorem applied for the meromorphic function $f$, it assumes every value, with possibly two exceptions $p_1,p_2\in\clC$, infinitely many times in every neighbourhood of $\infty$. This implies that $f^{n+2}(B)\supset\clC\setminus\{p_1,p_2\}$.

In both cases we conclude that the measure $\nu$ would be supported on at most two points $p_1, p_2\in \C$. 
Again by \eqref{conformality}, the set $\{p_1,p_2\}$ would be backward invariant. Hence, either $f(p_1)=p_1$ and $p_1$ has no other preimages or $f^2(p_2)=f(p_1)=p_2$ and, again, these points have no other preimages. Therefore, the measure $m_t$ would be supported on the set of (at most two) exceptional values of $f$.
\end{proof}

In fact, the second alternative can occur, as noted in the following example.

\begin{ex}
For $f(z) = ze^z$, the value $0$ is the unique finite exceptional value of $f$, with $f^{-1}(0) = \{0\}$, $f(0) = 0$ and $f'(0) = 1$. Consequently, $0 \in J(f)$ and the Dirac measure $\delta_0$ is $t$-conformal for every $t > 0$.
\end{ex}

The next propositions consider properties of the topological pressure. The first one is an essential strengthening of \cite[Proposition~5.7]{bowen}.

\begin{prop}\label{strange} Let $f$ satisfy the assumptions of Theorem~{\rm \ref{thm:pressure}}. Then for every $t > 0$, every GPS point $z_0$ and sufficiently large $r > 0$ we have
\[
P(f, t)=\limsup_{n\to\infty}\frac{1}{n}\ln S^{\D(r)}_n (t, z_0).
\]
\end{prop}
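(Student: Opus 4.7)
The direction $P(f,t) \ge \limsup_{n\to\infty} \tfrac{1}{n}\ln S^{\D(r)}_n(t, z_0)$ is immediate, since $S^{\D(r)}_n(t, z_0) \le S_n(t, z_0)$ for every $n$ and $\lim_{n\to\infty}\tfrac{1}{n}\ln S_n(t, z_0) = P(f,t)$ at a GPS point $z_0$ by Theorem~\ref{thm:pressure}. The work lies in the opposite inequality, which I approach via the hyperbolic characterization $P(f,t) = P_{hyp}(f,t)$ from Theorem~\ref{thm:pressure}.

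Given $\varepsilon>0$, I would pick a transitive isolated conformal repeller $X \subset J(f)$ with $P(f|_X,t) > P(f,t) - \varepsilon$. Being a hyperbolic repeller, $X$ is compact (so $X \subset \D(R_0)$ for some $R_0$) and disjoint from $\overline{\PP(f)}$. Standard Ruelle--Bowen theory for such repellers gives, for any $z_* \in X$,
\[
\lim_{n\to\infty} \tfrac{1}{n}\ln \sum_{w \in f^{-n}(z_*) \cap X} |(f^n)^*(w)|^{-t} = P(f|_X, t),
\]
and furnishes a uniform $\delta>0$ such that every inverse branch $g_n$ of $f^n$ with $g_n(z_*) \in X$ extends univalently to $\DD(z_*, \delta)$ with uniformly bounded distortion.

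To transfer this lower bound to $z_0$, I plan to use density of preimages of $z_0$ in $J(f)$ to select $m_0 \ge 0$ and $\zeta \in f^{-m_0}(z_0) \cap \DD(z_*, \delta/2)$. For each $w \in f^{-n}(z_*) \cap X$ the point $w_n := g_n(\zeta)$ lies in $f^{-(n+m_0)}(z_0)$, and Theorem~\ref{koebe} together with the chain rule produces constants $C, c > 0$ independent of $n$ and $w$ with $|w_n - w| \le C$ (so $w_n \in \D(R_0 + C) =: \D(r)$) and $|(f^{n+m_0})^*(w_n)|^{-t} \ge c |(f^n)^*(w)|^{-t}$. Summing over $w \in f^{-n}(z_*) \cap X$ and taking $\limsup_{n\to\infty}\tfrac{1}{n+m_0}\ln$ would give
\[
\limsup_{n\to\infty} \tfrac{1}{n}\ln S^{\D(r)}_n(t, z_0) \ge P(f|_X,t) > P(f,t) - \varepsilon.
\]

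The delicate step, and where this proposition genuinely strengthens \cite[Proposition~5.7]{bowen}, will be passing from this $\varepsilon$-approximation -- which only gives $\lim_{r\to\infty} \limsup_{n\to\infty} \tfrac{1}{n}\ln S^{\D(r)}_n(t, z_0) = P(f,t)$ -- to the claimed equality at a single sufficiently large finite $r$. The quantity $a_r := \limsup_{n\to\infty} \tfrac{1}{n}\ln S^{\D(r)}_n(t, z_0)$ is non-decreasing and bounded above by $P(f,t)$, and my argument only shows $a_r \to P(f,t)$; monotone convergence alone does not force the supremum to be attained. To close this gap I expect to control the tail $S^{\C \setminus \D(r)}_n(t, z_0)$ using the logarithmic tract estimates Lemmas~\ref{lem:|g|}--\ref{lem:|g^*|}, showing that its exponential growth rate in $n$ is strictly less than $P(f,t)$ once $r$ is sufficiently large, so that the bulk of $S_n(t, z_0)$ is captured inside $\D(r)$. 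This is the main technical obstacle I anticipate.
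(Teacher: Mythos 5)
You correctly dispose of the easy inequality, and your repeller argument legitimately reconstructs $\sup_{r>0}\limsup_n\frac{1}{n}\ln S^{\D(r)}_n(t,z_0)=P(f,t)$, which the paper simply quotes from \cite{bowen} as equation~\eqref{limsup}. You have also correctly located the genuine content of the proposition: passing from the supremum being $P(f,t)$ to the supremum being \emph{attained} at every sufficiently large finite $r$.

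However, your proposed resolution of that last step is the wrong lever, and I do not think it can be made to work as stated. You want to show that $\tau_r:=\limsup_n\frac{1}{n}\ln S^{\C\setminus\D(r)}_n(t,z_0)$ is \emph{strictly} less than $P(f,t)$ for $r$ large. But Lemma~\ref{lem:compact2} (via the tract estimates) only gives bounds of the shape $S^{\D(2r)\setminus\D(r)}_n(t,z_0)\le C(r)\,S_{n+1}^{\D(r)}(t,z_0)$ with $C(r)$ small but independent of $n$; summing over dyadic annuli gives $S^{\C\setminus\D(r)}_n\le\varepsilon(r)S_{n+1}$ with $\varepsilon(r)\to 0$, and a constant prefactor vanishes when you take $\frac{1}{n}\ln$ and $\limsup$. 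So this line only yields $\tau_r\le P(f,t)$, not $\tau_r<P(f,t)$; and indeed nothing rules out $\tau_r=P(f,t)$ even when the proposition holds, in which case your target strict inequality is simply false and the approach cannot close.

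The paper's argument is sharper and sidesteps this. It applies Lemma~\ref{lem:compact2} to a \emph{single} dyadic annulus rather than the whole tail: for $r$ large, $\tilde c_1(\ln r)^{4\pi}\le r$, hence
\[
S_n^{\D(2r)}(t,z_0)=S_n^{\D(2r)\setminus\D(r)}(t,z_0)+S_n^{\D(r)}(t,z_0)\le c\,\frac{(\ln r)^{3t}}{r^t}\,S_{n+1}^{\D(r)}(t,z_0)+S_n^{\D(r)}(t,z_0).
\]
The crucial feature is that the first term is controlled by the restricted sum $S_{n+1}^{\D(r)}$ (one more iterate, same ball), not by the unrestricted $S_{n+1}$. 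Taking $\frac{1}{n}\ln$ and $\limsup$ then gives $a_{2r}\le a_r$ in your notation (the $r$-dependent constant disappears in the limit). Combined with the trivial $a_{2r}\ge a_r$, this yields $a_{2r}=a_r$, so the non-decreasing function $r\mapsto a_r$ is eventually constant, and that constant must equal $\sup_r a_r=P(f,t)$. In short: the right move is not to kill the tail's exponential rate, but to fold each annulus back into the ball at the next time step and observe that the $\limsup$ is invariant under doubling $r$.
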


\begin{proof}
It follows from \cite{bowen} that 
\begin{equation}\label{limsup}
P(f,t)=\sup_{r > 0} \limsup_{n\to\infty}\frac{1}{n}\ln S^{\D(r)}_n (t, z_0).
\end{equation}
However, by Lemma~\ref{lem:compact2} (proved in Section~\ref{sec:C}), we have 
\[
S_n^{\D(2r)}(t,z_0)=S_{n}^{\D(2r)\setminus \D(r)}(t,z_0)+S_n^{\D(r)}(t,z_0)\le c\frac{(\ln r)^{3t}}{r^t} S_{n+1}^{\D(r)}(t,z_0)+ S_n^{\D(r)}(t,z_0)
\]
for sufficiently large $r$ and a constant $c > 0$. Hence, $\limsup$ in the formula \eqref{limsup} is the same for $r$ and $2r$. Since it is non-decreasing with $r$, this implies that it is actually constant for large $r$, which ends the proof.
\end{proof} 

The next result describes the alternative related to the existence of a zero of the pressure function. 

\begin{prop}\label{prop:clasif} Let $f$ satisfy the assumptions of Theorem~{\rm \ref{thm:pressure}} and let
\[
t_0 = \inf\{t > 0: P(f, t) \le 0\}, \qquad t_\infty = \sup\{t \ge 0: P(f, t) = + \infty\}.
\]
Then one of the three possibilities occurs:
\begin{itemize}
\item[$(a)$] $P(f, t_\infty) = +\infty$, $\lim_{t \to t_\infty^+} P(f,t) = +\infty$, $t_0 > t_\infty$ and $P(f, t_0) = 0$. 
\item[$(b)$] $0 \le P(f, t_\infty) < +\infty$, $t_0 \ge t_\infty$ and $P(f, t_0) = 0$. 
\item[$(c)$] $t_0 = t_\infty$ and $P(f, t_\infty) = P(f,t_0) < 0$. 
\end{itemize}
\end{prop}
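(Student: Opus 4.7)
The plan is to combine three facts from Theorem~\ref{thm:pressure}: monotonicity of $t \mapsto P(f,t)$, convexity of $P(f,\cdot)$ on its finiteness set, and the representation $P(f,t) = P_{hyp}(f,t) = \sup_X P(f|_X, t)$, which exhibits $P(f,\cdot)$ as a supremum of continuous functions and hence as a lower semi-continuous function of $t$. The preliminary observation is $t_\infty \le t_0$: monotonicity and the fact that $P(f,t) = +\infty > 0$ on $(0, t_\infty)$ keep this subinterval out of the sub-level set $\{t > 0 : P(f,t) \le 0\}$.

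In the case $t_\infty < t_0$, convexity forces $P(f,\cdot)$ to be continuous on the open interval $(t_\infty,\infty)$ on which it is finite; combined with positivity on $(t_\infty, t_0)$ (from the definition of $t_0$) and non-positivity on $(t_0,\infty)$ (from monotonicity), this yields $P(f,t_0) = 0$ by squeezing. If moreover $P(f,t_\infty) = +\infty$, then lower semi-continuity at $t_\infty$ gives $\liminf_{t \to t_\infty^+} P(f,t) \ge P(f,t_\infty) = +\infty$, and since the right-hand limit of a non-increasing function always exists in the extended sense, $\lim_{t \to t_\infty^+} P(f,t) = +\infty$, which is case~(a). If instead $P(f,t_\infty) < +\infty$, a negative value would put $t_\infty$ into the sub-level set and force $t_0 \le t_\infty$, contradicting $t_0 > t_\infty$; so $P(f,t_\infty) \ge 0$, giving case~(b).

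In the case $t_\infty = t_0$, lower semi-continuity gives $P(f,t_0) \le \liminf_{t \to t_0^+} P(f,t)$, and the right-hand side is $\le 0$ because $P(f,t) \le 0$ for every $t > t_0$; hence $P(f,t_0)$ is finite and $\le 0$. This is case~(b) when $P(f,t_0) = 0$ and case~(c) when $P(f,t_0) < 0$. The only step that requires anything beyond bookkeeping is the non-degeneracy of case~(a), namely that $P(f,t_\infty) = +\infty$ actually forces $\lim_{t \to t_\infty^+} P(f,t) = +\infty$ rather than a jump down to a finite value. Convexity of $P(f,\cdot)$ alone does not exclude such a jump; the crucial input is the identity $P = P_{hyp}$ with each $P(f|_X, \cdot)$ continuous, which upgrades $P(f,\cdot)$ to a lower semi-continuous function, and I expect this to be the main subtlety of the argument.
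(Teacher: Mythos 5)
Your proposal is correct and follows essentially the same route as the paper. The paper's proof also hinges on the identity $P(f,\cdot) = P_{hyp}(f,\cdot) = \sup_X P(f|_X,\cdot)$ over compact repellers $X$, using continuity of each $P(f|_X,\cdot)$ to deduce that $P(f,t_\infty) = +\infty$ forces $\lim_{t\to t_\infty^+}P(f,t)=+\infty$ (and, in the finite case, right-continuity at $t_\infty$), then disposes of the remaining case bookkeeping via monotonicity, convexity on $(t_\infty,\infty)$, and $P(f,2)\le 0$; your lower-semicontinuity phrasing and the squeezing argument at $t_0$ are just a slightly more explicit rendering of the same idea, and you have correctly identified the non-degeneracy of case~(a) as the only non-trivial step.
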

\begin{proof}Suppose $P(f, t_\infty) = +\infty$. By Theorem~\ref{thm:pressure}, $\sup_X P(f|_X, t_\infty) = \infty$, where $\sup$ is taken over all compact repellers $X \subset J(f)$. Since the pressure function $P(f|_X, t)$ is finite and continuous for all $t \ge 0$ (see e.g.~\cite{pu-book}), this implies $\lim_{t \to t_\infty^+} P(f,t) = +\infty$. Similarly we show that if $P(f, t_\infty) < \infty$, then $\lim_{t \to t_\infty^+} P(f,t) = P(f, t_\infty)$. Other assertions of the proposition follows easily form the fact that the function $t \mapsto P(f, t)$ is non-increasing and convex (and hence continuous) for $t \in (t_\infty, +\infty)$ and $P(f, 2) \le 0$ (see Theorem~\ref{thm:pressure}).
\end{proof}

\begin{rem} By \cite{BKZ-IMRN}, we have $t_0 > 1$.
\end{rem}

Note that the cases (a) and (b) in Proposition~\ref{prop:clasif} correspond to the existence of a zero of the pressure function. It is an open problem, whether the case (c) can actually appear for a transcendental map satisfying the assumptions of Theorem~{\rm \ref{thm:pressure}}.

\section{Proof of Theorem B}\label{sec:B}
\noindent{\it Case}~(a)\nopagebreak

\smallskip

Assume that $f \in \mathcal S$ and take a $t$-conformal measure $m_t$ on $J(f)$, with respect to the spherical metric and some $t > 0$.
Let
\[
D_n = \DD\left(\bigcup_{m=0}^{n-1}f^m(\Sing(f)) \cup \{\infty\}, e^{-\sqrt{n}}\right),
\]
where we denote
\[
\DD (X, r) = \{z \in \clC: \dist_{sph}(z, x) < r \text{ for some } x \in X\},
\] 
and
\[
E = \bigcap_{k=1}^\infty \bigcup_{n = k}^\infty D_n.
\]
By definition, $E \subset \overline{\PP(f)}\cup\{\infty\}$ and it is easily seen that the Hausdorff dimension of $E$ equals $0$. Let
\[
A_k =  \bigcap_{n = k}^\infty (\clC \setminus D_n)
\]
for $k \ge 1$. 
Then $(A_k)_{k=1}^\infty$ forms an increasing sequence of sets, and 
\[
\clC\setminus E=\bigcup_{k=1}^\infty A_k.
\]
By the definition of $A_k$, the spherical area of $\clC \setminus  A_k  = \bigcup_{n \ge k}^\infty D_n$ is smaller than 
\[
(\#\Sing(f)+1)\sum_{n=k}^\infty n e^{-\sqrt{n}},
\] 
which tends $0$ as $k \to \infty$. Hence, for large $k$ the set $A_k$ has positive area and, in particular, there exists a GPS point $z_0 \in A_k$. 

Again by the definition of $A_k$, for every $z \in A_k$ all branches of $f^{-n}$, $n \ge k$ are defined on $\DD(z, e^{-\sqrt{n}})$. Moreover, for every $n \ge k$ there exists at most countable partition $\{A_{k, j}^{(n)}\}_{j}$ of $A_k$ by non-empty Borel sets, such that $A_{k, j}^{(n)} \subset \DD(v_{k,j}^{(n)}, e^{-\sqrt{n}}/2)$ 
for some $v_{k, j}^{(n)} \in A_k$. By Theorem~\ref{koebe}, the distortion of all branches of $f^{-n}$ on $A_{k, j}^{(n)}$ is bounded by a constant independent of $n$. This and the $t$-conformality of $m_t$ imply
\begin{equation}\label{eq:m}
\frac{m_t(A_{k, j}^{(n)})}{C}\sum_{w\in f^{-n}(v_{k, j}^{(n)})}\frac{1}{|(f^n)^*(w)|^t} \le
m_t(f^{-n}(A_{k, j}^{(n)})) \le C m_t(A_{k, j}^{(n)})\sum_{w\in f^{-n}(v_{k, j}^{(n)})}\frac{1}{|(f^n)^*(w)|^t}
\end{equation}
for some $C > 0$ independent of $k, n, j$.
Using the definition of $A_k$ and \cite[Lemma~3.1]{conical}, we show that there exists $c > 0$ such that
\begin{equation}\label{eq:sum}
\frac 1 {e^{c\,\sqrt{n}}} \sum_{w\in f^{-n}(z_0)}\frac{1}{|(f^n)^*(w)|^t} \leq \sum_{w\in f^{-n}(v_{k, j}^{(n)})}\frac{1}{|(f^n)^*(w)|^t} \leq e^{c\,\sqrt{n}} \sum_{w\in f^{-n}(z_0)}\frac{1}{|(f^n)^*(w)|^t}
\end{equation}
for every $n \ge k$ and every set $A_{k, j}^{(n)}$ (see the proof of \cite[Lemma~5.4]{bowen} for details).

To prove the first assertion of the theorem, suppose that $m_t$ is not supported on $E$. Then 
\[
m_t(A_k) > 0
\]
for some $k$. Since $(A_k)_k$ is increasing, this holds for every sufficiently large $k$. Hence, by \eqref{eq:m} and \eqref{eq:sum}, we obtain
\begin{multline*}
m_t(f^{-n}(A_k)) = \sum_j m_t(f^{-n}(A_{k, j}^{(n)})) \ge \frac 1 C \sum_j m_t(A_{k, j}^{(n)})\sum_{w\in f^{-n}(v_{k, j}^{(n)})}\frac{1}{|(f^n)^*(w)|^t}\\ \ge \frac 1 C \sum_j \frac{m_t(A_{k, j}^{(n)})}{e^{c\,\sqrt{n}}}\sum_{w\in f^{-n}(z_0)}\frac{1}{|(f^n)^*(w)|^t}
= \frac{m_t(A_k)}{C e^{c\,\sqrt{n}}}\sum_{w\in f^{-n}(z_0)}\frac{1}{|(f^n)^*(w)|^t}.
\end{multline*}
This implies 
\[
\sum_{w\in f^{-n}(z_0)}\frac{1}{|(f^n)^*(w)|^t} \le \frac{C e^{c\,\sqrt{n}}}{m_t(A_k)} 
\]
for every $n \ge k$, which gives $P(t, f) \le 0$ and ends the proof of the first assertion.

\

To prove the second assertion, suppose $P(t, f) < 0$. Then
\[
\sum_{w\in f^{-n}(z_0)}\frac{1}{|(f^n)^*(w)|^t} < e^{-n\delta}
\]
for some $\delta > 0$ and every sufficiently large $n$.
Using again \eqref{eq:m} and \eqref{eq:sum}, we obtain, similarly as previously, for sufficiently large $n$, 
\begin{multline*}
m_t(f^{-n}(A_k))  \le C \sum_j m_t(A_{k, j}^{(n)})\sum_{w\in f^{-n}(v_{k, j}^{(n)})}\frac{1}{|(f^n)^*(w)|^t}\\
\le  Cm_t(A_k)e^{c\,\sqrt{n}}\sum_{w\in f^{-n}(z_0)}\frac{1}{|(f^n)^*(w)|^t} < C e^{c\,\sqrt{n}-n\delta} < e^{-n\delta/2}.
\end{multline*}
This shows that  for sufficiently large $k$, the series $\sum_n m_t(f^{-n}(A_k))$ is converging, so by the Borel--Cantelli Lemma, for $m_t$-almost every $z$ there exists $m_0=m_0(z)$ such that $f^m(z)\in \C \setminus A_k \subset \bigcup_{n = k}^\infty D_n$ for all $m\ge m_0$. This implies that for $m_t$-almost every point $z \in \C$, for every large $k$ there exists $m_0$ such that for every $m \ge m_0$ we have $\dist_{sph}(f^m(z), \zeta_m) < e^{-\sqrt{k}}$ for some $\zeta_m \in \PP(f) \cup \{\infty\}$. This proves the second assertion of (a).

\bigskip

\noindent{\it Case}~(b)\nopagebreak

\smallskip

Assume now $f \in \mathcal B$ is non-exceptional and $J(f) \setminus\overline{\PP(f)} \neq \emptyset$. The proof is analogous to the one in case~(a). Take an open spherical disc $D = \DD(z_0, r)$ for a small $r > 0$, such that $\DD(z_0, 2r) \cap (\overline{\PP(f)} \cup \{\infty\}) = \emptyset$. Then all branches of $f^{-n}$, $n > 0$ are defined on $D$, in particular $z_0$ is a GPS point. By Theorem~\ref{koebe}, the distortion of the branches is universally bounded. This together with the $t$-conformality of $m_t$ gives
\begin{equation}\label{eq:m'}
\frac{m_t(D)}{C}\sum_{w\in f^{-n}(z_0)}\frac{1}{|(f^n)^*(w)|^t} \le
m_t(f^{-n}(D)) \le C m_t(D)\sum_{w\in f^{-n}(z_0)}\frac{1}{|(f^n)^*(w)|^t}
\end{equation}
for some $C > 0$. 

Since, by Proposition~\ref{prop:support}, $m_t$ is not supported on $\overline{\PP(f)}$, there exists a disc $D= \DD(z_0, r)$ as above with $m_t(D) > 0$, so \eqref{eq:m'} gives 
\[
\sum_{w\in f^{-n}(z_0)}\frac{1}{|(f^n)^*(w)|^t} \le \frac{C m_t(f^{-n}(D))}{m_t(D)}
\]
for every $n$, which implies $P(f, t) \le 0$. This proves the first assertion. 

Suppose now $P(f, t) < 0$. Then there exists $\delta > 0$, such that 
\[
\sum_{w\in f^{-n}(z_0)}\frac{1}{|(f^n)^*(w)|^t} < e^{-n\delta}
\]
for large $n$, so \eqref{eq:m'} implies
\[
m_t(f^{-n}(D)) < Cm_t(\C) e^{-n\delta}
\]
for every disc $D$ as above. Again, by the Borel--Cantelli Lemma, for $m_t$-almost every $z$ there exists $n_0$ such that $f^n(z)\in \C \setminus D$ for all $n\ge n_0$. Since for every $\varepsilon > 0$ the set $\C \setminus \DD(\overline{\PP(f)} \cup \{\infty\}, \varepsilon)$ can be covered
by a finite number of discs $D$ as above, we conclude that for $m_t$-almost every point $z \in \C$ we have $f^n(z) \to \overline{\PP(f)} \cup \{\infty\}$ as $n \to \infty$. This ends the proof of the second assertion of (b).

\section{Construction of a conformal measure -- proof of Theorem~C}\label{sec:C}

Throughout this section, we assume that $f$ is a map from class $\SSS$ with a logarithmic tract over $\infty$ or a (non-exceptional) map from class $\B$, with a logarithmic tract over $\infty$ and $J(f) \setminus\overline{\PP(f)} \neq \emptyset$.

To prove Theorem~C, we need the following lemmas. 
Fix $t > 0$.

\begin{lem}\label{lem:v}
There exist $c_1, c_2, r_0>0$ such that for every for every $z \in\C$ with $|z| \ge r_0$, 
\[
S_1^{\D(c_1(\ln|z|)^{4\pi})}(t,z)>\frac{c_2|z|^t}{(\ln |z|)^{3t}}.
\]
\end{lem}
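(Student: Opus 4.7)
The plan is to show that \emph{a single} preimage of $z$ inside a logarithmic tract $U$ already contributes the required amount to $S_1^{\D(c_1(\ln|z|)^{4\pi})}(t,z)$: Lemmas~\ref{lem:|g|} and~\ref{lem:|g^*|} supply exactly what is needed, one controlling the location of that preimage and the other the size of its spherical derivative.

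First I would set up the tract. Fix a logarithmic tract $U$ of $f$ over $\infty$, with $f|_{\overline U}:\overline U\to\{|\zeta|\ge R\}$, and (by translating $U$ if necessary, or, for $f\in\B$, by enlarging $R$ via Remark~\ref{rem:log}) assume $0\notin U$. Pick once and for all a constant $L>1$, a base point $z_2\in V:=\{|\zeta|>R\}$ with $|z_2|=LR>1$, and a single branch $g_0$ of $f^{-1}$ near $z_2$ coming from the universal covering $f|_U:U\to V$; write $w_2:=g_0(z_2)\in U$. Then $|z_2|$, $|w_2|$ and $|g_0^*(z_2)|$ are positive constants depending only on $f$ and on these choices.

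Now set $r_0:=LR$ and take $z\in\C$ with $|z|\ge r_0$, so that $z\in V$. Since $f|_U$ is a covering of $V$, the branch $g_0$ extends along any arc in $V$ from $z_2$ to $z$ to a branch $g$ of $f^{-1}$ near $z$, producing a preimage $w:=g(z)\in U\subset f^{-1}(z)$. Lemma~\ref{lem:|g|} applied with $z_1=z$ yields
\[
|w|=|g(z)|\le c\,|w_2|\,\Bigl(\frac{\ln|z|}{\ln|z_2|}\Bigr)^{4\pi}\le c_1\,(\ln|z|)^{4\pi}
\]
with $c_1:=c|w_2|/(\ln|z_2|)^{4\pi}$, so $w\in\D(c_1(\ln|z|)^{4\pi})$. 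Applying Lemma~\ref{lem:|g^*|} to the same pair and using $|g^*(z)|=1/|f^*(w)|$ gives
\[
\frac{1}{|f^*(w)|}=|g^*(z)|\ge c^{-1}\,|g_0^*(z_2)|\,\frac{|z|}{|z_2|}\Bigl(\frac{\ln|z|}{\ln|z_2|}\Bigr)^{-3}\ge c'\,\frac{|z|}{(\ln|z|)^{3}}
\]
with $c':=c^{-1}|g_0^*(z_2)|(\ln|z_2|)^3/|z_2|>0$. Raising to the $t$-th power and keeping only the single term corresponding to $w$ in $S_1^{\D(c_1(\ln|z|)^{4\pi})}(t,z)$ yields the required inequality with $c_2:=(c')^t$.

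The main point is that there is no real obstacle: one does not need to enumerate preimages, because the contribution of a single well-chosen preimage, obtained by following a fixed branch from a fixed base point, already produces a term of the right order $|z|^t/(\ln|z|)^{3t}$. The distortion Lemmas~\ref{lem:|g|} and~\ref{lem:|g^*|} furnish constants independent of $z$ precisely because the base point $z_2$ is fixed once and for all.
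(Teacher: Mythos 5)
Your proof is correct and is essentially the same as the paper's: both fix a base point $z_2=w$ in $V$ with a branch $g$ of $f^{-1}$ leading into the logarithmic tract $U$, then apply Lemma~\ref{lem:|g|} to place the preimage $w=g(z)$ inside $\D(c_1(\ln|z|)^{4\pi})$ and Lemma~\ref{lem:|g^*|} to bound $|g^*(z)|$ from below by $c'\,|z|/(\ln|z|)^3$, so that the single term $|g^*(z)|^t$ already yields the claimed lower bound on $S_1^{\D(c_1(\ln|z|)^{4\pi})}(t,z)$.
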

\begin{proof} By assumption, $f$ has a logarithmic tract $U$ over $\infty$. 
Fix $w\in\C$ of large modulus and let $g$ be a branch of $f^{-1}$ defined in a neighbourhood of $v$ leading to $U$. Take an arbitrary point $z$ with $|z|>|w|$. By Lemma~\ref{lem:|g^*|}, applied for $z_1 = z, z_2 = w$, we have 
\[
|g^*(z)| > \frac{c|z|}{(\ln |z|)^3}
\]
for some extension of the branch $g$, where $c$ is a constant depending only on $w$.
Similarly, by Lemma~\ref{lem:|g|}, there exists a constant $c_2 > 0$, such that
\[
|g(z)| < c_1 (\ln|z|)^{4\pi} 
\]
if $|z|$ is sufficiently large. We conclude that
\[
S_1^{\D( c_1(\ln|z|)^{4\pi})}(t,z) \ge |g^*(z)|^t >  \frac{c^t|z|^t}{(\ln |z|)^{3t}}.
\]
\end{proof}
\begin{cor}\label{cor:v}
There exist $c, r_0>0$ such that for every for every $z \in\C$ with $|z| \ge r_0$, 
\[
S_1(t,z)>\frac{c|z|^t}{(\ln |z|)^{3t}}.
\]
\end{cor}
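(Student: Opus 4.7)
\textbf{Proof proposal for Corollary~\ref{cor:v}.} The plan is to observe that the corollary is a direct weakening of Lemma~\ref{lem:v}. Indeed, by definition of the partial sums,
\[
S_1(t,z) = \sum_{w\in f^{-1}(z)} \frac{1}{|f^*(w)|^t} \ge \sum_{w \in f^{-1}(z)\cap \D(c_1(\ln|z|)^{4\pi})} \frac{1}{|f^*(w)|^t} = S_1^{\D(c_1(\ln|z|)^{4\pi})}(t,z),
\]
since every summand in $S_1(t,z)$ is nonnegative and we are only dropping terms. Applying Lemma~\ref{lem:v} with its constants $c_1, c_2, r_0$, we obtain $S_1(t,z) > c_2|z|^t/(\ln|z|)^{3t}$ for all $|z| \ge r_0$, which is the desired bound with $c := c_2$.

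There is no real obstacle here, since restricting a sum of nonnegative terms to a subset yields a lower bound; the content of the corollary is entirely supplied by Lemma~\ref{lem:v}. The only reason to state it as a separate corollary is that in subsequent arguments one typically does not need the information that the preimages witnessing the lower bound lie inside the specific disc $\D(c_1(\ln|z|)^{4\pi})$, only that the full first-level sum $S_1(t,z)$ satisfies the stated polynomial lower bound in $|z|$. Thus the proof is essentially a one-line deduction from the preceding lemma.
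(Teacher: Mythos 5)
Your proof is correct and matches the paper's (implicit) reasoning exactly: the corollary is an immediate weakening of Lemma~\ref{lem:v}, obtained by observing that $S_1(t,z)\ge S_1^{\D(c_1(\ln|z|)^{4\pi})}(t,z)$ since one is only discarding nonnegative terms. The paper gives no separate proof for the corollary, confirming that the intended deduction is exactly this one-liner.
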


\begin{lem}\label{lem:compact} Let $z_0 \in J(f)$ be a GPS point. Then there exist $c, k_0 > 0$ such that for every $n \ge 1$ there holds
\[
S_{n+1}(t,z_0) \ge c \sum_{k = k_0}^\infty \frac{2^{kt}}{k^{3t}}S^{\D(2^{k+1})\setminus \D(2^k)}_n(t,z_0).
\]
\end{lem}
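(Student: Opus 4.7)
The plan is to exploit the multiplicative structure of the pressure sums together with the lower bound on $S_1(t,w)$ for $w$ of large modulus already established in Corollary~\ref{cor:v}. The key identity is the chain-rule factorization: every preimage $u \in f^{-(n+1)}(z_0)$ arises uniquely as some $u \in f^{-1}(w)$ with $w \in f^{-n}(z_0)$, and
\[
\bigl|(f^{n+1})^*(u)\bigr| = |f^*(u)|\cdot \bigl|(f^n)^*(w)\bigr|.
\]
Grouping preimages by their image at step $n$, this yields
\[
S_{n+1}(t,z_0) \;=\; \sum_{w\in f^{-n}(z_0)} \frac{1}{|(f^n)^*(w)|^t}\,S_1(t,w).
\]

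Next, I would fix $k_0$ large enough so that $2^{k_0} \ge r_0$, where $r_0$ is the threshold from Corollary~\ref{cor:v}. For each $w \in \D(2^{k+1})\setminus\D(2^k)$ with $k \ge k_0$, Corollary~\ref{cor:v} gives
\[
S_1(t,w) \;\ge\; \frac{c\,|w|^t}{(\ln|w|)^{3t}} \;\ge\; \frac{c\,2^{kt}}{((k+1)\ln 2)^{3t}} \;\ge\; \frac{c'\,2^{kt}}{k^{3t}},
\]
for some constant $c'>0$ independent of $k$, where the last step absorbs the bounded factor $(k/(k+1))^{3t}(\ln 2)^{-3t}$ into the constant (valid since $k\ge k_0\ge 1$).

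Finally, I would partition the outer sum over $w\in f^{-n}(z_0)$ into the dyadic annuli $\D(2^{k+1})\setminus\D(2^k)$ for $k\ge k_0$ (discarding the contribution from $w\in\D(2^{k_0})$, which only makes the bound smaller):
\[
S_{n+1}(t,z_0) \;\ge\; \sum_{k=k_0}^\infty\;\sum_{\substack{w\in f^{-n}(z_0)\\ w\in\D(2^{k+1})\setminus\D(2^k)}} \frac{1}{|(f^n)^*(w)|^t}\cdot\frac{c'\,2^{kt}}{k^{3t}} \;=\; c'\sum_{k=k_0}^\infty \frac{2^{kt}}{k^{3t}}\,S_n^{\D(2^{k+1})\setminus\D(2^k)}(t,z_0),
\]
which is exactly the claimed inequality with $c=c'$.

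\textbf{Main difficulty.} There is no substantive obstacle here: the lemma is essentially a repackaging of the chain rule with the pointwise derivative estimate from Corollary~\ref{cor:v} (which itself relies on the logarithmic-tract geometry via Lemma~\ref{lem:|g^*|}). The only care needed is choosing $k_0$ uniformly in $n$ so that the applicability threshold of Corollary~\ref{cor:v} is met on the whole tail of annuli, and absorbing the harmless $(k+1)$ versus $k$ discrepancy into a single constant. The role of $z_0$ being a GPS point is not used directly in this argument; it merely ensures that $S_{n+1}(t,z_0)$ is the correct quantity computing the pressure, so that the lemma can be fed into Proposition~\ref{strange}.
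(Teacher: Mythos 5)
Your proof is correct and takes essentially the same route as the paper: the chain-rule identity $S_{n+1}(t,z_0)=\sum_{w\in f^{-n}(z_0)}S_1(t,w)/|(f^n)^*(w)|^t$, followed by Corollary~\ref{cor:v} on each dyadic annulus and a discard of the inner ball $\D(2^{k_0})$. The only cosmetic difference is that the paper invokes the monotonicity of $x\mapsto x^t/(\ln x)^{3t}$ at the left endpoint $|w|\ge 2^k$, whereas you bound numerator and denominator separately over the annulus and absorb the resulting $(k+1)/k$ factor into the constant; both yield the same constant-up-to-$(\ln 2)^{3t}$ conclusion.
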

\begin{proof}
By Corollary~\ref{cor:v} and the fact that the function $x \mapsto x^t/(\ln x)^{3t}$ is increasing for large $x > 0$, for sufficiently large $k_0$ we obtain
\begin{align*}
S_{n+1}(t,z_0)&=\sum_{w\in f^{-(n+1)}(z_0)}\frac{1}{|(f^{n+1})^*(w)|^t}=
\sum_{z\in f^{-n}(z_0)}
\frac{S_1(t,z)}{|(f^n)^*(z)|^t}\\
&\ge \sum_{k = k_0}^\infty \sum_{\substack{z\in f^{-n}(z_0),\\ z\in \D(2^{k+1})\setminus \D(2^k)}}\frac{S_1(t,z)}{|(f^n)^*(z)|^t}
\ge\frac{c}{(\ln 2)^{3t}} \sum_{k = k_0}^\infty \frac{2^{kt}}{k^{3t}} S^{\D(2^{k+1})\setminus \D(2^k)}_n(t,z_0).
\end{align*}
\end{proof}
Using more detailed  estimates provided by Lemma~\ref{lem:v} instead of Corollary~\ref{cor:v}, we obtain the following, slightly  more delicate result, which is needed in the proof of Proposition~\ref{strange}.

\begin{lem}\label{lem:compact2} Let $z_0 \in J(f)$ be a GPS point. Then there exist $\tilde c_1, \tilde c_2, r_0 > 0$ such that for every $r > r_0$ and $n \ge 1$ there holds
\[
S_{n+1}^{\D(\tilde c_1(\ln r)^{4\pi})}(t,z_0) \ge \frac{\tilde c_2r^t}{(\ln r)^{3t}} S^{\D(2r)\setminus \D(r)}_n(t,z_0).
\]
\end{lem}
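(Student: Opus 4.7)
My plan is to mimic the derivation of Lemma~\ref{lem:compact}, but replace the use of Corollary~\ref{cor:v} with the sharper Lemma~\ref{lem:v}, which not only bounds $S_1(t,z)$ from below but also locates the relevant preimages inside a disc of controlled radius $\D(c_1(\ln|z|)^{4\pi})$. The price to pay is that I must track the dependence of this disc on $|z|$ and convert it into a disc whose radius depends only on $r$.

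First I would use the usual decomposition
\[
S_{n+1}^{\D(\tilde c_1 (\ln r)^{4\pi})}(t,z_0)
= \sum_{z\in f^{-n}(z_0)}
\frac{S_1^{\D(\tilde c_1 (\ln r)^{4\pi})}(t,z)}{|(f^n)^*(z)|^t},
\]
and restrict the outer sum to $z \in \D(2r)\setminus \D(r)$ to get a lower bound by $\sum_{z\in f^{-n}(z_0)\cap(\D(2r)\setminus\D(r))} S_1^{\D(\tilde c_1 (\ln r)^{4\pi})}(t,z)/|(f^n)^*(z)|^t$.

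Next, for any such $z$ with $r \le |z| < 2r$, Lemma~\ref{lem:v} (applied at the point $z$) gives
\[
S_1^{\D(c_1(\ln|z|)^{4\pi})}(t,z) > \frac{c_2 |z|^t}{(\ln|z|)^{3t}} \ge \frac{c_2 r^t}{(\ln(2r))^{3t}} \ge \frac{c_2'\, r^t}{(\ln r)^{3t}}
\]
for all sufficiently large $r$, since $\ln(2r)/\ln r \to 1$. The key observation is then that $\ln|z|\le \ln(2r)\le 2\ln r$ for $r$ large, so
\[
c_1(\ln|z|)^{4\pi}\le c_1 2^{4\pi}(\ln r)^{4\pi} =: \tilde c_1(\ln r)^{4\pi},
\]
and hence $\D(c_1(\ln|z|)^{4\pi})\subset \D(\tilde c_1 (\ln r)^{4\pi})$. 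This uniform enclosure is the one step that genuinely requires the refined form of Lemma~\ref{lem:v}; it is also the only subtle point of the argument, since without it one would only get the radius $c_1(\ln|z|)^{4\pi}$, which depends on the point~$z$ we are summing over.

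Combining these observations yields
\[
S_1^{\D(\tilde c_1 (\ln r)^{4\pi})}(t,z) \ge S_1^{\D(c_1(\ln|z|)^{4\pi})}(t,z) \ge \frac{c_2'\, r^t}{(\ln r)^{3t}}
\]
for every $z\in \D(2r)\setminus \D(r)$, and substituting this into the decomposition gives
\[
S_{n+1}^{\D(\tilde c_1 (\ln r)^{4\pi})}(t,z_0)
\ge \frac{c_2'\, r^t}{(\ln r)^{3t}}\!\!
\sum_{\substack{z\in f^{-n}(z_0)\\ z\in \D(2r)\setminus\D(r)}}\!\!
\frac{1}{|(f^n)^*(z)|^t}
= \frac{\tilde c_2\, r^t}{(\ln r)^{3t}}\, S_n^{\D(2r)\setminus\D(r)}(t,z_0),
\]
as desired. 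The hypothesis that $z_0$ is a GPS point is not really used in the estimate itself; it merely ensures that the quantities $S_n(t,z_0)$ are the right objects on which the pressure is computed (as needed for the application in Proposition~\ref{strange}). The only obstacle worth flagging is the uniformization of the enclosing disc across all $z$ with $|z|\in[r,2r)$; everything else is bookkeeping with the comparison $\ln(2r)\sim \ln r$.
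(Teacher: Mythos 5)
Your proof is correct and follows essentially the same route as the paper: decompose $S_{n+1}^{\D(R)}(t,z_0)=\sum_{z\in f^{-n}(z_0)} S_1^{\D(R)}(t,z)/|(f^n)^*(z)|^t$, restrict the outer sum to the annulus $\D(2r)\setminus\D(r)$, and apply Lemma~\ref{lem:v} at each such $z$, uniformizing the enclosing disc via $|z|<2r$. You are slightly more careful than the paper's displayed computation in tracking the passage from the $z$-dependent disc $\D(c_1(\ln|z|)^{4\pi})$ to a disc of radius depending only on $r$ (the paper's display has $\ln r$ and $\ln 2r$ juxtaposed in a way that reads like a typo), and your remark that the GPS hypothesis is not used in the estimate itself is accurate.
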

\begin{proof} Analogously as in the proof of Lemma~\ref{lem:compact}, using Lemma~\ref{lem:v} and the monotonicity of $x \mapsto x^t/(\ln x)^{3t}$ for large $x > 0$, we obtain
\begin{align*}
S_{n+1}^{\D(c_1(\ln 2r)^{4\pi})}(t,z_0)&=
\sum_{z\in f^{-n}(z_0)}
\frac{S_1^{\D(c_1(\ln r)^{4\pi})}(t,z)}{|(f^n)^*(z)|^t}\ge 
\sum_{\substack{z\in f^{-n}(z_0),\\ z\in \D(2r) \setminus \D(r)}}\frac{S_1^{\D(c_1(\ln 2r)^{4\pi})}(t,z)}{|(f^n)^*(z)|^t}\\
&\ge\frac{c_2r^t}{(\ln 2r)^{3t}}\sum_{\substack{z\in f^{-n}(z_0),\\ z\in \D(2r) \setminus \D(r)}}\frac{1}{|(f^n)^*(z)|^t} = \frac{c_2r^t}{(\ln 2r)^{3t}} S^{\D(2r)\setminus \D(r)}_n(t,z_0)
\end{align*}
for sufficiently large $r$.
\end{proof}

Assume that there exists $t>0$ such that $P(f,t)=0$ and fix this value of $t$ from now on. As in \cite{denker-urbanski,mayer-urbanski-etds}, we consider the probability measures
\[
\mu_s = \frac{1}{\Sigma_s}\sum_{n=1}^\infty b_n e^{-ns}\sum_{w\in f^{-n}(z_0)}\frac{\delta_w}{|(f^n)^*(w)|^t},
\]
where $s > 0$, $z_0 \in J(f)$ is a GPS point, $b_n$ is a sequence of positive real numbers (independent of $s$), $\delta_w$ denotes the Dirac measure at $w$, and
\[
\Sigma_s = \sum_{n=1}^\infty b_n e^{-ns} S_n(t, z_0).
\]
Note that we can choose a GPS point $z_0 \in J(f)$, since the Hausdorff dimension of $J(f)$ is always positive (see \cite{stallard-dim}).
Since 
\[
P(f,t) = \lim_{n\to \infty} \frac 1 n \ln S_n(t, z_0) = 0,
\]
the series
\[
\sum_{n=1}^\infty e^{-ns}S_n(t, z_0)
\]
is convergent for $s > 0$. Moreover, we can choose the sequence $b_n$ so that
\begin{equation}\label{eq:b_n}
\lim_{n\to \infty} \frac{b_{n+1}}{b_n} = 1,  \qquad \lim_{s \to 0^+} \Sigma_s = +\infty
\end{equation}
(see \cite[Lemma 3.1]{denker-urbanski}). In particular, the measure $\mu_s$ is well-defined, with its support in $\C$. Since $z_0 \in J(f)$, in fact $\mu_s$ is supported on $J(f)$.

\begin{lem}\label{almost_tight}
There exists a constant $c > 0$ such that for every $0 < s < 1$,
\[
\sum_{k = 1}^\infty \frac{2^{kt}}{k^{3t}} \mu_s(\D(2^{k+1})\setminus \D(2^k)) < c.
\]
\end{lem}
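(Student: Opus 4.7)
The plan is to unfold the definition of $\mu_s$, interchange the two summations, and then feed the inner sum over $k$ into Lemma~\ref{lem:compact} to convert it into a level-$(n+1)$ pressure sum, which will be reabsorbed into $\Sigma_s$ using the slow variation of $b_n$.

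First, writing out the definition of $\mu_s$ gives
\[
\mu_s(\D(2^{k+1})\setminus \D(2^k)) = \frac{1}{\Sigma_s}\sum_{n=1}^\infty b_n e^{-ns}\, S_n^{\D(2^{k+1})\setminus \D(2^k)}(t,z_0).
\]
Multiplying by $2^{kt}/k^{3t}$, summing over $k$, and swapping the order of summation (everything is nonnegative), the quantity we want to bound becomes
\[
\frac{1}{\Sigma_s}\sum_{n=1}^\infty b_n e^{-ns}\, T_n, \qquad T_n := \sum_{k=1}^\infty \frac{2^{kt}}{k^{3t}} S_n^{\D(2^{k+1})\setminus \D(2^k)}(t,z_0).
\]

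Next I would split $T_n$ into the finite range $1\le k<k_0$ and the tail $k\ge k_0$, where $k_0$ is the constant from Lemma~\ref{lem:compact}. For the finite head, since $S_n^{\D(2^{k+1})\setminus \D(2^k)}(t,z_0)\le S_n(t,z_0)$, there is a constant $C'$ (depending only on $k_0, t$) with
\[
\sum_{k=1}^{k_0-1}\frac{2^{kt}}{k^{3t}} S_n^{\D(2^{k+1})\setminus \D(2^k)}(t,z_0) \le C'\, S_n(t,z_0).
\]
For the tail, Lemma~\ref{lem:compact} directly gives
\[
\sum_{k=k_0}^\infty \frac{2^{kt}}{k^{3t}} S_n^{\D(2^{k+1})\setminus \D(2^k)}(t,z_0) \le \frac{1}{c}\, S_{n+1}(t,z_0).
\]
Therefore $T_n \le C' S_n(t,z_0) + c^{-1} S_{n+1}(t,z_0)$.

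It remains to control the two resulting contributions to $\Sigma_s^{-1}\sum_n b_n e^{-ns} T_n$. The $S_n$ term gives exactly
\[
\frac{C'}{\Sigma_s}\sum_{n=1}^\infty b_n e^{-ns} S_n(t,z_0) = C'.
\]
For the $S_{n+1}$ term, reindex by $m=n+1$:
\[
\frac{1}{c\,\Sigma_s}\sum_{n=1}^\infty b_n e^{-ns} S_{n+1}(t,z_0) = \frac{e^s}{c\,\Sigma_s}\sum_{m=2}^\infty b_{m-1} e^{-ms} S_m(t,z_0).
\]
By \eqref{eq:b_n}, the ratio $b_{m-1}/b_m$ tends to $1$, so it is uniformly bounded by some $C''$ for all $m\ge 2$. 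Consequently this term is at most $e\cdot C''/c$ for every $s\in(0,1)$, again using $e^s<e$. Collecting the two bounds yields the desired uniform constant $c$.

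The only delicate step is the reindexing maneuver that recovers a genuine copy of $\Sigma_s$ out of the shifted series $\sum_n b_n e^{-ns} S_{n+1}(t,z_0)$; this is exactly where the hypothesis $b_{n+1}/b_n\to 1$ on the weights $b_n$ pays off, and it is the reason these weights were inserted into the definition of $\mu_s$ in the first place. Everything else is straightforward bookkeeping on the two-variable sum.
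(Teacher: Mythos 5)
Your proof is correct and follows essentially the same route as the paper: unfold $\mu_s$, swap the sums, feed the $k\ge k_0$ tail into Lemma~\ref{lem:compact} to produce $S_{n+1}(t,z_0)$, reindex, and absorb the shift using $b_{m-1}/b_m\to 1$ and $e^s<e$. The only cosmetic difference is the handling of the finite head $k<k_0$, which you reabsorb into $\Sigma_s$ via $S_n^{\D(2^{k+1})\setminus\D(2^k)}\le S_n$, while the paper simply uses that $\mu_s$ is a probability measure.
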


\begin{proof}
Write 
\[
\nu_n = \sum_{w\in f^{-n}(z_0)}\frac{\delta_w}{|(f^n)^*(w)|^t},
\]
Then 
\[
\mu_s=\frac{1}{\Sigma_s}\sum_{n=1}^\infty b_n e^{-ns}\nu_n.
\]
By Lemma~\ref{lem:compact},
\[
\nu_{n+1}(J(f)) =  
S_{n+1}(t,z_0) \ge c \sum_{k = k_0}^\infty \frac{2^{kt}}{k^{3t}} S^{\D(2^{k+1})\setminus \D(2^k)}_n(t,z_0) = 
c \sum_{k = k_0}^\infty \frac{2^{kt}}{k^{3t}} \nu_n(\D(2^{k+1})\setminus \D(2^k)).
\]
This together with \eqref{eq:b_n} implies
\begin{align*}
\sum_{k = k_0}^\infty \frac{2^{kt}}{k^{3t}} \mu_s(\D(2^{k+1})\setminus \D(2^k)) &= \frac{1}{\Sigma_s}\sum_{k = k_0}^\infty\frac{2^{kt}}{k^{3t}}\sum_{n=1}^\infty b_n e^{-ns}\nu_n(\D(2^{k+1})\setminus \D(2^k))\\
&\le \frac{1}{c\Sigma_s} \sum_{n=1}^\infty b_n e^{-ns}\nu_{n+1}(J(f))\\
&\leq  \frac{e^s}{c}\sup_n \frac{b_n}{b_{n+1}}  \frac{1}{\Sigma_s} \sum_{n=1}^\infty b_{n+1} e^{-(n+1)s}\nu_{n+1}(J(f))\\
&= \frac{e^s}{c}\sup_n \frac{b_n}{b_{n+1}}  \left(\mu_s(J(f)) - \frac{b_1 e^{-s}}{\Sigma_s}\nu_1(J(f))\right)\\
&< \frac{e}{c}\sup_n \frac{b_n}{b_{n+1}}  \mu_s(J(f))\\
&= \frac{e}{c}\sup_n \frac{b_n}{b_{n+1}}.
\end{align*}
To end the proof, note that
\[
\sum_{k = 1}^{k_0-1} \frac{2^{kt}}{k^{3t}} \mu_s(\D(2^{k+1})\setminus \D(2^k)) \le \sum_{k = 1}^{k_0-1} \frac{2^{kt}}{k^{3t}} < \frac{2^{k_0t}}{2^t-1}.
\]

\end{proof}

\begin{prop}\label{tight}
There exists a constant $c > 0$ such that for every $0 < s < 1$,
\[
\sum_{k = 1}^\infty \frac{2^{kt}}{k^{3t}} \mu_s(J(f)\setminus \D(2^k)) < c.
\]
\end{prop}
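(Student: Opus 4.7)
The plan is to reduce to Lemma~\ref{almost_tight} by a Fubini-type rearrangement, using the fact that the weights $2^{kt}/k^{3t}$ grow essentially geometrically in $k$.

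First I would write, since $\mu_s$ is supported on $J(f) \subset \C$,
\[
\mu_s(J(f)\setminus \D(2^k)) \;=\; \sum_{j=k}^\infty \mu_s(\D(2^{j+1})\setminus \D(2^j)).
\]
Substituting into the left-hand side of the proposition and swapping the order of summation,
\[
\sum_{k=1}^\infty \frac{2^{kt}}{k^{3t}} \mu_s(J(f)\setminus \D(2^k))
\;=\;\sum_{j=1}^\infty \mu_s(\D(2^{j+1})\setminus \D(2^j)) \sum_{k=1}^{j} \frac{2^{kt}}{k^{3t}}.
\]

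Next I would control the inner sum. Set $a_k = 2^{kt}/k^{3t}$; then $a_{k+1}/a_k = 2^t (k/(k+1))^{3t}$, which tends to $2^t>1$, so there exist $k_1\ge 1$ and $0<q<1$ with $a_{k}/a_{k+1}\le q$ for all $k\ge k_1$. A standard geometric-tail estimate then gives a constant $K>0$ (depending only on $t$) such that
\[
\sum_{k=1}^{j} \frac{2^{kt}}{k^{3t}} \;\le\; K\,\frac{2^{jt}}{j^{3t}} \qquad \text{for every } j\ge 1.
\]
(For $j<k_1$ this is trivial; for $j\ge k_1$ one splits the sum at $k_1$ and bounds the tail by a geometric series with ratio $q$ and last term $a_j$.)

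Combining the two displays yields
\[
\sum_{k=1}^\infty \frac{2^{kt}}{k^{3t}} \mu_s(J(f)\setminus \D(2^k))
\;\le\; K \sum_{j=1}^\infty \frac{2^{jt}}{j^{3t}} \mu_s(\D(2^{j+1})\setminus \D(2^j)),
\]
and the right-hand side is bounded above by $Kc$ uniformly in $0<s<1$ by Lemma~\ref{almost_tight}. This proves the proposition.

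There is no real obstacle here: the content lies in Lemma~\ref{almost_tight}, and the present step is purely a summation-by-parts / Fubini exchange together with the observation that the weights $2^{kt}/k^{3t}$ are eventually increasing geometrically, so the partial sum is comparable to its last term.
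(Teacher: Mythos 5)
Your proof is correct and takes essentially the same route as the paper's: swap the order of summation (Fubini), bound the partial sums $\sum_{k=1}^{j} 2^{kt}/k^{3t}$ by a constant times the last term using the eventual geometric growth of the weights, and invoke Lemma~\ref{almost_tight}. The only cosmetic difference is that the paper bounds the finitely many low-index terms $k < k_0$ by a separate crude estimate at the end, whereas you absorb them into the constant $K$ in the partial-sum bound; both are equally valid.
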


\begin{proof} Enlarging $k_0$ from the previous lemmas, we can assume
\[
\frac{2^{(k-1)t}}{(k-1)^{3t}} < q\frac{2^{kt}}{k^{3t}}
\]
for some fixed $0 < q < 1$ and for every $k \ge k_0$. Hence,
\[
\sum_{k = k_0}^j\frac{2^{kt}}{k^{3t}} < \frac{1}{1-q} \frac{2^{jt}}{j^{3t}}.
\]
for every $j \ge k_0$.
Using this and Lemma~\ref{almost_tight}, we obtain
\begin{align*}
\sum_{k = k_0}^\infty \frac{2^{kt}}{k^{3t}} \mu_s(J(f)\setminus \D(2^k))&=
\sum_{k = k_0}^\infty \sum_{j = k}^\infty\frac{2^{kt}}{k^{3t}}  \mu_s(\D(2^{j+1})\setminus \D(2^j))\\
&=\sum_{j = k_0}^\infty \sum_{k = k_0}^j\frac{2^{kt}}{k^{3t}}  \mu_s(\D(2^{j+1})\setminus \D(2^j))\\
&<\frac{1}{1-q}\sum_{j = k_0}^\infty \frac{2^{jt}}{j^{3t}}  \mu_s(\D(2^{j+1})\setminus \D(2^j)) < c.
\end{align*}
Since 
\[
\sum_{k = 1}^{k_0-1} \frac{2^{kt}}{k^{3t}} \mu_s(J(f)\setminus \D(2^k)) \le \sum_{k = 1}^{k_0-1} \frac{2^{kt}}{k^{3t}} < \frac{2^{k_0t}}{2^t-1},
\]
this ends the proof of the proposition.
\end{proof}

\begin{cor}\label{cor:tight}
The family $\{\mu_s\}_{s \in (0, 1)}$ is tight. Consequently, there exists a weak limit
\[
m_t = \lim_{j \to \infty} \mu_{s_j} 
\]
for some sequence $s_j \to 0^+$, which is a probability measure with support in $J(f)$.
\end{cor}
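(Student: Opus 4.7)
\smallskip

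The plan is to deduce tightness of the family $\{\mu_s\}_{s\in(0,1)}$ directly from Proposition~\ref{tight} and then invoke Prokhorov's theorem. Since every term in the convergent sum of Proposition~\ref{tight} is bounded by the uniform constant $c$, we have, for every $k\ge 1$ and every $s\in(0,1)$,
\[
\mu_s\bigl(J(f)\setminus \D(2^k)\bigr) \;\le\; c\,\frac{k^{3t}}{2^{kt}}.
\]
As $t>0$, the right-hand side tends to $0$ as $k\to\infty$, uniformly in $s$. Consequently, for every $\varepsilon>0$ there exists $k=k(\varepsilon)$ such that $\mu_s\bigl(\overline{\D(2^k)}\bigr)>1-\varepsilon$ for all $s\in(0,1)$. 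Since $\overline{\D(2^k)}$ is compact in $\C$, this is precisely the definition of tightness of the family $\{\mu_s\}_{s\in(0,1)}$ as probability measures on $\C$.

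Next, I would apply Prokhorov's theorem (equivalently, Helly's selection theorem): a tight family of Borel probability measures on a Polish space is relatively compact in the topology of weak convergence. Thus, given any sequence $s_j\to 0^+$, I can extract a subsequence (still denoted $s_j$) such that $\mu_{s_j}$ converges weakly to some Borel probability measure $m_t$ on $\C$. The fact that $m_t$ has total mass $1$, rather than losing mass to infinity, follows from tightness: for any $\varepsilon>0$, the uniform estimate above combined with the Portmanteau theorem applied to the open set $\C\setminus \overline{\D(2^k)}$ yields $m_t(\C\setminus\overline{\D(2^k)})\le\varepsilon$ for $k$ large.

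Finally, to see that $\supp m_t\subset J(f)$, note that each $\mu_s$ is supported on $J(f)$ by construction (the atoms are iterated preimages of the GPS point $z_0\in J(f)$), and $J(f)$ is closed in $\C$. By the Portmanteau theorem, for any open set $U\subset \C\setminus J(f)$ we have $m_t(U)\le\liminf_{j\to\infty}\mu_{s_j}(U)=0$, so $m_t(\C\setminus J(f))=0$ and $\supp m_t\subset J(f)$.

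There is essentially no obstacle at this step; all the analytic work is contained in Proposition~\ref{tight}, whose conclusion was tailored precisely to give a tail bound strong enough to yield tightness. The only point requiring mild care is that our ambient space is the non-compact $\C$, not $\clC$: without tightness, weak limits could charge the point at infinity (as in the example of Theorem~A), but the uniform tail decay rules this out.
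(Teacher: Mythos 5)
Your proof is correct and follows essentially the same route as the paper: you extract the tail bound $\mu_s(J(f)\setminus\D(2^k))\le c\,k^{3t}/2^{kt}$ from Proposition~\ref{tight} (by bounding a single term of the convergent series), obtain tightness, and invoke Prokhorov's theorem together with the Portmanteau inequalities to get a weak limit which is a probability measure supported in $J(f)$. Your version merely spells out the Prokhorov/Portmanteau steps that the paper leaves implicit.
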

\begin{proof} To show the tightness of the family $\{\mu_s\}_{s \in (0, 1)}$, it is sufficient to check that for every $\varepsilon > 0$ there exists a compact subset $K$ of $J(f)$ such that $\mu_s(K) > 1 - \varepsilon$ for every $0<s<1$.  
This follows immediate from the estimation
\[
\mu_s(J(f)\setminus \D(2^k)) < c\frac{k^{3t}}{2^{kt}}
\]
for every $0<s<1$, which is a consequence of Proposition~\ref{tight}.
\end{proof}

\begin{cor}\label{cor:small}
There exists a constant $c > 0$ such that
\[
\sum_{k = 1}^\infty \frac{2^{kt}}{k^{3t}} m_t(J(f)\setminus \D(2^k)) < c.
\]
\end{cor}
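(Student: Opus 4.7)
\textbf{Proof plan for Corollary~\ref{cor:small}.} The strategy is to combine the uniform-in-$s$ bound from Proposition~\ref{tight} with the weak convergence $\mu_{s_j}\to m_t$, using the Portmanteau theorem in the direction adapted to our closed sets and then a Fatou-type argument to pass the $\liminf$ through the series.

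First, since $\D(2^k)=\{|z|<2^k\}$, the set $J(f)\setminus \D(2^k)=J(f)\cap\{|z|\ge 2^k\}$ is closed, so Portmanteau gives only a lower bound on $m_t$ of this set, not the upper bound we need. The fix is to enlarge it slightly: for each $k\ge 1$ observe
\[
J(f)\setminus\D(2^k)\subset J(f)\cap\{|z|>2^{k-1}\},
\]
where the right-hand set is open in $J(f)$. Applying the Portmanteau inequality for open sets along the weakly convergent sequence $\mu_{s_j}\to m_t$ yields
\[
m_t\bigl(J(f)\setminus\D(2^k)\bigr)\le m_t\bigl(J(f)\cap\{|z|>2^{k-1}\}\bigr)\le \liminf_{j\to\infty}\mu_{s_j}\bigl(J(f)\setminus\D(2^{k-1})\bigr).
\]

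Next I multiply by $2^{kt}/k^{3t}$ and sum over $k\ge 1$. Since all terms are non-negative, Fatou's lemma (for counting measure on $\mathbb N$) allows me to move $\liminf_j$ outside the sum:
\[
\sum_{k=1}^\infty\frac{2^{kt}}{k^{3t}}m_t\bigl(J(f)\setminus\D(2^k)\bigr)\le \liminf_{j\to\infty}\sum_{k=1}^\infty\frac{2^{kt}}{k^{3t}}\mu_{s_j}\bigl(J(f)\setminus\D(2^{k-1})\bigr).
\]
After the reindexing $l=k-1$, the $l=0$ term contributes $2^t\mu_{s_j}(J(f))=2^t$, and for $l\ge 1$ we have $\frac{2^{(l+1)t}}{(l+1)^{3t}}\le 2^t\,\frac{2^{lt}}{l^{3t}}$, so the right-hand side is bounded above by
\[
2^t+2^t\sum_{l=1}^\infty\frac{2^{lt}}{l^{3t}}\mu_{s_j}\bigl(J(f)\setminus\D(2^l)\bigr)< 2^t(1+c)
\]
uniformly in $j$, by Proposition~\ref{tight}. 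Taking $\liminf_j$ yields the desired estimate with constant $2^t(1+c)$.

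\textbf{Anticipated obstacle.} The only point that needs care is the direction of the Portmanteau inequality: one cannot apply it directly to the closed set $J(f)\setminus\D(2^k)$ (which would yield a lower, not upper, bound on $m_t$), and must instead pay for an index shift by enlarging to the open set $\{|z|>2^{k-1}\}$. The geometric factor $2^{kt}/k^{3t}$ is comparable to $2^{(k-1)t}/(k-1)^{3t}$, so this shift costs only a multiplicative constant, and the rest is a routine Fatou argument.
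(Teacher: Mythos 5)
Your proof is correct and takes essentially the same route as the paper: Corollary~\ref{cor:small} is deduced from Proposition~\ref{tight} via the Portmanteau inequality $m_t(U)\le\liminf_j\mu_{s_j}(U)$ for open $U$. The paper's own proof is a one-line remark citing exactly this fact, without addressing that $J(f)\setminus\D(2^k)$ is closed in $J(f)$ rather than open. You correctly identified this gap and repaired it by enlarging to the open set $J(f)\cap\{|z|>2^{k-1}\}$, paying the index shift $k\mapsto k-1$, and then using Fatou's lemma for the counting measure to interchange $\liminf_j$ with the sum over $k$. The reindexing estimate $\frac{2^{(l+1)t}}{(l+1)^{3t}}\le 2^t\frac{2^{lt}}{l^{3t}}$ and the final constant $2^t(1+c)$ are correct, so this is a complete and careful elaboration of the argument the authors had in mind.
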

\begin{proof} The estimate follows easily from Proposition~\ref{tight} and the fact 
\[
m_t(U) \le \liminf_{j \to \infty}\mu_{s_j}(U)
\]
for every open subset $U$ of $J(f)$.
\end{proof}

Note that Corollary~\ref{cor:small} proves Remark~\ref{rem:thmB} immediately.

\begin{prop}\label{prop:conf}
The measure $m_t$ is $t$-conformal with respect to the spherical metric.
\end{prop}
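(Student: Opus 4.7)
My plan is to follow the standard Denker--Urba\'nski argument: establish an approximate conformality for the measures $\mu_s$ uniformly in the test set, and then pass to the weak limit obtained in Corollary~\ref{cor:tight}.

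\emph{Step 1 (approximate conformality for $\mu_s$).} Fix a Borel set $A\subset \C$ on which $f$ is injective. The chain rule gives $|(f^n)^*(w)|^t=|f^*(w)|^t\,|(f^{n-1})^*(f(w))|^t$, and since $f|_A$ is injective, the map $w\mapsto f(w)$ is a bijection from $A\cap f^{-n}(z_0)$ onto $f(A)\cap f^{-(n-1)}(z_0)$. Plugging these into the definition of $\mu_s$ and reindexing with $m=n-1$ (using the conventions $(f^0)^*\equiv 1$ and $f^{-0}(z_0)=\{z_0\}$), I would obtain
\[
\int_A |f^*|^t\,d\mu_s \;=\; \mu_s(f(A)) \;+\; R_s(A),
\]
where, writing $\varepsilon(A)=1$ if $z_0\in f(A)$ and $0$ otherwise, and $T_m(A)=\sum_{v\in f(A)\cap f^{-m}(z_0)}|(f^m)^*(v)|^{-t}\le S_m(t,z_0)$,
\[
R_s(A) \;=\; \frac{b_1 e^{-s}\,\varepsilon(A)}{\Sigma_s} \;+\; \frac{1}{\Sigma_s}\sum_{m=1}^\infty \bigl(b_{m+1}e^{-(m+1)s}-b_m e^{-ms}\bigr)\,T_m(A).
\]

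\emph{Step 2 (uniform vanishing of the remainder).} The split
\[
b_{m+1}e^{-(m+1)s}-b_m e^{-ms} \;=\; (b_{m+1}-b_m)\,e^{-(m+1)s} \;-\; (1-e^{-s})\,b_m e^{-ms},
\]
together with $0\le T_m(A)\le S_m(t,z_0)$, gives the $A$-independent estimate
\[
|R_s(A)| \;\le\; \frac{b_1 e^{-s}}{\Sigma_s} \;+\; (1-e^{-s}) \;+\; \frac{1}{\Sigma_s}\sum_{m=1}^\infty |b_{m+1}-b_m|\,e^{-(m+1)s}\,S_m(t,z_0),
\]
where the middle term arose from $(1-e^{-s})\sum_m b_m e^{-ms}S_m/\Sigma_s = 1-e^{-s}$. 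The first two terms tend to $0$ as $s\to 0^+$ by~\eqref{eq:b_n}. For the third, write $|b_{m+1}-b_m|=b_m|b_{m+1}/b_m-1|$; given $\eta>0$, pick $N$ with $|b_{m+1}/b_m-1|<\eta$ for $m\ge N$, so the tail $m\ge N$ contributes at most $\eta\cdot\Sigma_s/\Sigma_s=\eta$, while the finite initial segment contributes $O(1/\Sigma_s)=o(1)$ since $\Sigma_s\to\infty$. Hence $\sup_A|R_s(A)|\to 0$ as $s\to 0^+$.

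\emph{Step 3 (weak limit and extension).} I would then pass to the limit along $s_j\to 0^+$ from Corollary~\ref{cor:tight}. Consider first a relatively compact open set $A\subset\C$ whose closure $\overline A$ avoids all poles of $f$, on which $f$ is univalent, and with $m_t(\partial A)=m_t(\partial f(A))=0$; such boundary conditions can be achieved by a standard perturbation-of-radius argument since $m_t$ is Radon. On such an $A$, the function $|f^*|^t$ is continuous and bounded on $\overline A$, so the Portmanteau theorem yields $\int_A|f^*|^t\,d\mu_{s_j}\to\int_A|f^*|^t\,dm_t$ and $\mu_{s_j}(f(A))\to m_t(f(A))$, and combining with Step~2 gives $m_t(f(A))=\int_A|f^*|^t\,dm_t$. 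Since the two set functions $A\mapsto m_t(f(A))$ and $A\mapsto \int_A|f^*|^t\,dm_t$ are $\sigma$-additive on the class of Borel sets on which $f$ is injective (injectivity passing trivially through countable disjoint decompositions), and since small discs of the above form generate the Borel $\sigma$-algebra on $\C$ minus the (nowhere-dense) set of poles, a standard Dynkin/monotone-class argument extends the identity to every Borel $A\subset\C$ on which $f$ is injective, which is the desired conformality.

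The crux of the proof — and the step I expect to need the most care — is the third summand in Step~2: although short to state, the vanishing genuinely uses both parts of~\eqref{eq:b_n} (slow variation of $b_n$ to make the local ratio $|b_{m+1}/b_m-1|$ small, and $\Sigma_s\to\infty$ to dispose of the initial $O(1)$ segment). A secondary technical point is guaranteeing a rich enough supply of ``nice'' test sets in Step~3, which is immediate from the fact that $f$ is locally univalent off the critical set, together with Radon-ness of $m_t$.
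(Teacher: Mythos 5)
Your proof follows essentially the same Denker--Urba\'nski route as the paper: the same chain-rule/reindexing identity linking $\int_A|f^*|^t\,d\mu_s$ to $\mu_s(f(A))$, the same use of the two properties in~\eqref{eq:b_n} (slow variation of $b_n$ for the tail, $\Sigma_s\to\infty$ for the initial segment) to kill the remainder uniformly in $A$, and the same passage to the weak limit. The only differences are cosmetic — you isolate the $(1-e^{-s})$ contribution explicitly rather than estimating $|e^{-s}b_{m+1}/b_m-1|$ in one go, and your Step~3 spells out the choice of null-boundary test sets and the $\sigma$-additivity/Dynkin extension that the paper compresses into ``easily proves'' — so this is the paper's argument, carried out correctly and somewhat more carefully.
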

\begin{proof}
The proof is the same as in \cite{denker-urbanski, mayer-urbanski-etds}. Take a set $A \subset \C$ such that $f$ is univalent on $A$. By definition, we have
\[
\mu_{s_j}(f(A)) = \frac{1}{\Sigma_{s_j}} \sum_{n=1}^\infty b_n e^{-ns_j} S^{f(A)}_n(t,z_0)
\]
and
\begin{align*}
\int_A |f^*|^t d \mu_{s_j} &= \frac{1}{\Sigma_{s_j}} \sum_{n=1}^\infty b_n e^{-ns_j} \sum_{w \in f^{-n}(z_0) \cap A} \frac{|f^*(w)|^t}{|(f^n)^*(w)|^t} \\&= \frac{b_1 e^{-s_j}}{\Sigma_{s_j}} + 
\frac{1}{\Sigma_{s_j}} \sum_{n=2}^\infty b_n e^{-ns_j} S^{f(A)}_{n-1}(t,z_0)\\
&= \frac{b_1e^{-s_j}}{\Sigma_{s_j}} + \frac{1}{\Sigma_{s_j}} \sum_{n=1}^\infty b_{n+1} e^{-(n+1)s_j} S^{f(A)}_n(t,z_0).
\end{align*}
Take a small $\varepsilon > 0$ and fix $n_0$ such that $|b_{n+1}/b_n - 1| < \varepsilon$ for every $n \ge n_0$. Then
\begin{multline*}
\left|\int_A |f^*|^t d \mu_{s_j} - \mu_{s_j}(f(A))\right| \le \\ \frac{b_1 e^{-s_j}}{\Sigma_{s_j}} + \frac{1}{\Sigma_{s_j}}\sum_{n = 1}^{n_0 - 1}(b_{n+1} + b_n) S^{f(A)}_n(t,z_0) + \frac{1}{\Sigma_{s_j}}\sum_{n = n_0}^\infty|b_{n+1}e^{-s_j} - b_n| e^{-ns_j}S^{f(A)}_n(t,z_0).
\end{multline*}
The first and second term in the latter formula tend to $0$ as $j \to \infty$, since $\Sigma_{s_j} \to \infty$ (see \eqref{eq:b_n}). The third term can be estimated as
\begin{multline*}
\frac{1}{\Sigma_{s_j}}\sum_{n = n_0}^\infty|b_{n+1}e^{-s_j} - b_n| e^{-ns_j}S^{f(A)}_n(t,z_0) = \frac{1}{\Sigma_{s_j}}\sum_{n = n_0}^\infty\left|e^{-s_j}\frac{b_{n+1}}{b_n} - 1\right| b_n e^{-ns_j}S^{f(A)}_n(t,z_0)\\
\le \frac{(1 + e^{-s_j)}\varepsilon}{\Sigma_{s_j}}\sum_{n = n_0}^\infty b_n e^{-ns_j}S^{f(A)}_n(t,z_0) \le (1 + e^{-s_j})\varepsilon \mu_{s_j}(f(A)) \le 2 \varepsilon.
\end{multline*}
We conclude that 
\[
\left|\int_A |f^*|^t d \mu_{s_j} - \mu_{s_j}(f(A))\right| \to 0
\]
as $j \to \infty$. This together with that fact
\[
m_t(U) \le \liminf_{j \to \infty}\mu_{s_j}(U), \qquad m_t(F) \ge \limsup_{j \to \infty}\mu_{s_j}(F)
\]
for every open subset $U$ and closed subset $F$ of $J(f)$ easily proves
\[
m_t(f(A)) = \int_A |f^*|^t d m_t.
\]
\end{proof}

The above proposition ends the proof of Theorem~C. We complete the paper by the following observation considering the case $t = 2$.

\begin{prop}\label{prop:=2} 
Suppose $P(f,2) = 0$ and let $m_2$ be the measure constructed in the proof of Theorem~{\rm C}. Then the following hold.

\begin{itemize} 
\item[$(a)$] If $J(f) \neq \C$, then $m_2(J_r(f)) = 0$.

\item[$(b)$] If $f$ is hyperbolic, then $m_2(J(f) \setminus (I(f)\cup \bigcup_{n=1}^\infty f^{-n}(\infty))) = 0$.
\end{itemize}
\end{prop}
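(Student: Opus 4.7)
The core observation is that when $t=2$ the spherical Lebesgue measure $\Lambda$ on $\widehat{\C}$ satisfies the $2$-conformality identity on every set where $f$ is injective (because $|f^*|^2$ is exactly the spherical Jacobian of $f$). So the constructed probability measure $m_2$, which is genuinely supported on $J(f)$, should behave like $\Lambda$ on the ``radial'' part of $J(f)$ where one may blow up neighbourhoods with bounded distortion. My plan is to show that $m_2$ has finite upper Lebesgue density at every point of $J_r(f)$, deduce $m_2|_{J_r(f)}\ll \Lambda|_{J_r(f)}$, and then close the argument via Propositions~\ref{prop:J_r} and~\ref{prop:bd}.

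For the density bound, fix $z\in J_r(f)$ and, using Definition~\ref{def:radial}, choose $r>0$, $n_k\to\infty$, and branches $g_k$ of $f^{-n_k}$ defined on $\DD(f^{n_k}(z),r)$ with $g_k(f^{n_k}(z))=z$. Set $D_k=\DD(f^{n_k}(z),r/2)$; the spherical Koebe distortion theorem (Theorem~\ref{koebe}) bounds the distortion of $|g_k^*|$ on $D_k$ by a universal constant $K$, so on $g_k(D_k)$ one has $|(f^{n_k})^*|\ge K^{-1}|(f^{n_k})^*(z)|$. Iterating $2$-conformality,
\[
m_2(D_k)=\int_{g_k(D_k)}|(f^{n_k})^*(w)|^2\,dm_2(w)\ge \frac{|(f^{n_k})^*(z)|^2}{K^2}\, m_2(g_k(D_k)),
\]
and using $m_2(D_k)\le 1$ gives $m_2(g_k(D_k))\le K^2/|(f^{n_k})^*(z)|^2$. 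Koebe's $\tfrac14$-type inclusion yields $g_k(D_k)\supset \DD(z,\rho_k)$ with $\rho_k\asymp 1/|(f^{n_k})^*(z)|$, which tends to $0$ by the usual normality argument on $J(f)$, while $\Lambda(\DD(z,\rho_k))\asymp \rho_k^2$. Dividing the two estimates,
\[
\liminf_{\rho\to 0}\frac{m_2(\DD(z,\rho))}{\Lambda(\DD(z,\rho))}\le C<\infty
\]
at every $z\in J_r(f)$, with $C$ independent of $z$.

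Stratifying $J_r(f)$ by this finite upper density and applying a Vitali/Besicovitch covering argument (valid for the doubling measure $\Lambda$ on $\widehat{\C}$) gives $m_2|_{J_r(f)}\ll \Lambda|_{J_r(f)}$. Part (a) then follows from Proposition~\ref{prop:bd}(a), which states that $\Lambda(J_r(f))=0$ whenever $J(f)\neq\C$, so $m_2(J_r(f))=0$. For part (b), Proposition~\ref{prop:J_r}(b) furnishes, under hyperbolicity, the inclusion
\[
J(f)\setminus\bigl(I(f)\cup\textstyle\bigcup_{n=1}^\infty f^{-n}(\infty)\bigr)\subset J_r(f),
\]
while Proposition~\ref{prop:bd}(b) gives $\Lambda(J(f)\setminus I(f))=0$; the absolute continuity just established then forces the set on the left to have $m_2$-measure zero.

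The main technical point to watch is the uniformity of the Koebe-type estimates in the spherical metric as $f^{n_k}(z)$ may drift toward $\infty$ — one needs both the distortion bound on $D_k$ and the size of the Koebe disc $\DD(z,\rho_k)$ to be controlled by constants depending only on $r$. Once this is in place, everything else is a direct iteration of conformality combined with the classical Lebesgue differentiation theorem on $(\widehat{\C},\Lambda)$.
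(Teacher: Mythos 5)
Your proof follows essentially the same route as the paper: a finite pointwise upper density estimate for $m_2$ against spherical Lebesgue measure at each $z\in J_r(f)$, obtained by blowing up a spherical disc around $f^{n_k}(z)$ with the inverse branch, using the spherical Koebe distortion theorem and iterated $2$-conformality, and then concluding $m_2|_{J_r(f)}\ll\Leb_2$ so that Propositions~\ref{prop:bd} and~\ref{prop:J_r} finish both parts. One small inaccuracy: the density constant $C$ is not independent of $z$, since it depends on the radius $r(z)$ from Definition~\ref{def:radial}, but this is harmless because, as you note, one stratifies $J_r(f)$ by the value of the density before applying the covering argument.
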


\begin{proof} Let $\Leb_2$ denote the $2$-dimensional Lebesgue measure on $\clC$,  with respect to the spherical metric. To prove~(a), we will show
\begin{equation}\label{eq:delta}
\liminf_{\delta \to 0^+} \frac{m_2(\D(z, \delta))}{\delta^2} < \infty
\end{equation}
for $z \in J_r(f)$. This will imply that $m_2$ on $J_r(f)$ is absolutely continuous with respect to $\Leb_2$ (see e.g.~\cite[Theorem 2.12]{mattila}) and to prove (a), it will be enough to use Proposition~\ref{prop:bd}.

To show \eqref{eq:delta}, take $z \in J_r(f)$ and define the sequence $n_k$, branch $g_k$ of $f^{-n_k}$ on the spherical disk $D_k$ and numbers $r, d_k$ as in the proof of Proposition~\ref{prop:bd}. By the bounded distortion of $g_k$, we have
\[
\DD(z, c_1 d_k) \subset g_k(D_k)
\]
for some constant $c_1 > 0$. Moreover, $\Leb_2(D_k) \ge c_2$ for a constant $c_2 > 0$ depending only on $r$ (and thus only on $z$), and $m_2(D_k) \le 1$. Recall also that $d_k \to 0$ as $k \to \infty$. In view of these, using the $2$-conformality of $m_2$ and $\Leb_2$ (with respect to the spherical metric) together with the bounded distortion of $g_k$ and the comparability of the spherical and Euclidean metric on compact sets in $\C$, we obtain
\[
\frac{m_2(\D(z, c_1 d_k))}{d_k^2} \le c_3\frac{m_2(g_k(D_k))}{\Leb_2(g_k(D_k))} \le c_4 \frac{m_2(D_k)}{\Leb_2(D_k)} \le \frac{c_4}{c_2}
\]
for some constants $c_3, c_4$ independent of $k$, which shows \eqref{eq:delta} and ends the proof of~(a).

The assertion~(b) follows immediately from~(a) and Proposition~\ref{prop:J_r}.
\end{proof}

\bibliographystyle{amsalpha}
\bibliography{conformal}

\providecommand{\bysame}{\leavevmode\hbox to3em{\hrulefill}\thinspace}
\providecommand{\MR}{\relax\ifhmode\unskip\space\fi MR }
\providecommand{\MRhref}[2]{%
  \href{http://www.ams.org/mathscinet-getitem?mr=#1}{#2}
}
\providecommand{\href}[2]{#2}
\begin{thebibliography}{PRLS04}

\bibitem[Bar95]{baranski-tangent}
Krzysztof Bara{\'n}ski, \emph{Hausdorff dimension and measures on {J}ulia sets
  of some meromorphic maps}, Fund. Math. \textbf{147} (1995), no.~3, 239--260.
  \MR{1348721}

\bibitem[Ber93]{bergweiler}
Walter Bergweiler, \emph{Iteration of meromorphic functions}, Bull. Amer. Math.
  Soc. (N.S.) \textbf{29} (1993), no.~2, 151--188. \MR{1216719}

\bibitem[BKZ09]{BKZ-IMRN}
Krzysztof Bara{\'n}ski, Bogus{\l}awa Karpi{\'n}ska, and Anna Zdunik,
  \emph{Hyperbolic dimension of {J}ulia sets of meromorphic maps with
  logarithmic tracts}, Int. Math. Res. Not. IMRN (2009), no.~4, 615--624.
  \MR{2480096}

\bibitem[BKZ12]{bowen}
\bysame, \emph{Bowen's formula for meromorphic functions}, Ergodic Theory
  Dynam. Systems \textbf{32} (2012), no.~4, 1165--1189. \MR{2955309}

\bibitem[Bow79]{rufusbowen}
Rufus Bowen, \emph{Hausdorff dimension of quasicircles}, Inst. Hautes \'Etudes
  Sci. Publ. Math. (1979), no.~50, 11--25. \MR{556580}

\bibitem[BPS01]{BPS}
J{\'e}r{\^o}me Buzzi, Fr{\'e}d{\'e}ric Paccaut, and Bernard Schmitt,
  \emph{Conformal measures for multidimensional piecewise invertible maps},
  Ergodic Theory Dynam. Systems \textbf{21} (2001), no.~4, 1035--1049.
  \MR{1849600}

\bibitem[CS07]{skorulski}
Ion Coiculescu and Bart{\l}omiej Skorulski, \emph{Thermodynamic formalism of
  transcendental entire maps of finite singular type}, Monatsh. Math.
  \textbf{152} (2007), no.~2, 105--123. \MR{2346428}

\bibitem[DG99]{DG}
Manfred Denker and Mikhail Gordin, \emph{Gibbs measures for fibred systems},
  Adv. Math. \textbf{148} (1999), no.~2, 161--192. \MR{1736956}

\bibitem[DU91a]{DU-parab}
M.~Denker and M.~Urba{\'n}ski, \emph{Hausdorff and conformal measures on
  {J}ulia sets with a rationally indifferent periodic point}, J. London Math.
  Soc. (2) \textbf{43} (1991), no.~1, 107--118. \MR{1099090}

\bibitem[DU91b]{DU-conf}
\bysame, \emph{On {S}ullivan's conformal measures for rational maps of the
  {R}iemann sphere}, Nonlinearity \textbf{4} (1991), no.~2, 365--384.
  \MR{1107011}

\bibitem[DU91c]{denker-urbanski}
Manfred Denker and Mariusz Urba{\'n}ski, \emph{On the existence of conformal
  measures}, Trans. Amer. Math. Soc. \textbf{328} (1991), no.~2, 563--587.
  \MR{1014246 (92k:58155)}

\bibitem[GS09]{graczyk-smirnov}
Jacek Graczyk and Stanislav Smirnov, \emph{Non-uniform hyperbolicity in complex
  dynamics}, Invent. Math. \textbf{175} (2009), no.~2, 335--415. \MR{2470110}

\bibitem[KU02]{KU1}
Janina Kotus and Mariusz Urba{\'n}ski, \emph{Conformal, geometric and invariant
  measures for transcendental expanding functions}, Math. Ann. \textbf{324}
  (2002), no.~3, 619--656. \MR{1938460}

\bibitem[KU04]{KU2}
\bysame, \emph{Geometry and ergodic theory of non-recurrent elliptic
  functions}, J. Anal. Math. \textbf{93} (2004), 35--102. \MR{2110325}

\bibitem[KU05]{KU3}
\bysame, \emph{The dynamics and geometry of the {F}atou functions}, Discrete
  Contin. Dyn. Syst. \textbf{13} (2005), no.~2, 291--338. \MR{2152392}

\bibitem[KU06]{KU4}
\bysame, \emph{Geometry and dynamics of some meromorphic functions}, Math.
  Nachr. \textbf{279} (2006), no.~13-14, 1565--1584. \MR{2269255}

\bibitem[KU08]{ku}
\bysame, \emph{Fractal measures and ergodic theory of transcendental
  meromorphic functions}, Transcendental dynamics and complex analysis, London
  Math. Soc. Lecture Note Ser., vol. 348, Cambridge Univ. Press, Cambridge,
  2008, pp.~251--316. \MR{2458807}

\bibitem[LSV98]{LSS}
Carlangelo Liverani, Benoit Saussol, and Sandro Vaienti, \emph{Conformal
  measure and decay of correlation for covering weighted systems}, Ergodic
  Theory Dynam. Systems \textbf{18} (1998), no.~6, 1399--1420. \MR{1658635}

\bibitem[Mat95]{mattila}
Pertti Mattila, \emph{Geometry of sets and measures in {E}uclidean spaces},
  Cambridge Studies in Advanced Mathematics, vol.~44, Cambridge University
  Press, Cambridge, 1995, Fractals and rectifiability. \MR{1333890}

\bibitem[McM87]{mcmullen-area}
Curt McMullen, \emph{Area and {H}ausdorff dimension of {J}ulia sets of entire
  functions}, Trans. Amer. Math. Soc. \textbf{300} (1987), no.~1, 329--342.
  \MR{871679 (88a:30057)}

\bibitem[MU96]{mauldin-urbanski}
R.~Daniel Mauldin and Mariusz Urba{\'n}ski, \emph{Dimensions and measures in
  infinite iterated function systems}, Proc. London Math. Soc. (3) \textbf{73}
  (1996), no.~1, 105--154. \MR{1387085}

\bibitem[MU08]{mayer-urbanski-etds}
Volker Mayer and Mariusz Urba{\'n}ski, \emph{Geometric thermodynamic formalism
  and real analyticity for meromorphic functions of finite order}, Ergodic
  Theory Dynam. Systems \textbf{28} (2008), no.~3, 915--946. \MR{2422021
  (2010f:37080)}

\bibitem[MU10]{mayer-urbanski-mem}
\bysame, \emph{Thermodynamical formalism and multifractal analysis for
  meromorphic functions of finite order}, Mem. Amer. Math. Soc. \textbf{203}
  (2010), no.~954, vi+107. \MR{2590263}

\bibitem[Pat76]{patterson}
S.~J. Patterson, \emph{The limit set of a {F}uchsian group}, Acta Math.
  \textbf{136} (1976), no.~3-4, 241--273. \MR{0450547}

\bibitem[PRLS04]{PRS}
Feliks Przytycki, Juan Rivera-Letelier, and Stanislav Smirnov, \emph{Equality
  of pressures for rational functions}, Ergodic Theory Dynam. Systems
  \textbf{24} (2004), no.~3, 891--914. \MR{2062924}

\bibitem[Prz99]{conical}
Feliks Przytycki, \emph{Conical limit set and {P}oincar\'e exponent for
  iterations of rational functions}, Trans. Amer. Math. Soc. \textbf{351}
  (1999), no.~5, 2081--2099. \MR{1615954 (99h:58110)}

\bibitem[PU10]{pu-book}
Feliks Przytycki and Mariusz Urba{\'n}ski, \emph{Conformal fractals: ergodic
  theory methods}, London Mathematical Society Lecture Note Series, vol. 371,
  Cambridge University Press, Cambridge, 2010. \MR{2656475}

\bibitem[Rem09]{rempe-radial}
Lasse Rempe, \emph{Hyperbolic dimension and radial {J}ulia sets of
  transcendental functions}, Proc. Amer. Math. Soc. \textbf{137} (2009), no.~4,
  1411--1420. \MR{2465667}

\bibitem[Rue78]{ruelle-book}
David Ruelle, \emph{Thermodynamic formalism}, Encyclopedia of Mathematics and
  its Applications, vol.~5, Addison-Wesley Publishing Co., Reading, Mass.,
  1978, The mathematical structures of classical equilibrium statistical
  mechanics, With a foreword by Giovanni Gallavotti and Gian-Carlo Rota.
  \MR{511655}

\bibitem[Sta94]{stallard-dim}
Gwyneth~M. Stallard, \emph{The {H}ausdorff dimension of {J}ulia sets of
  meromorphic functions}, J. London Math. Soc. (2) \textbf{49} (1994), no.~2,
  281--295. \MR{1260113}

\bibitem[Sta99]{stallard}
\bysame, \emph{The {H}ausdorff dimension of {J}ulia sets of hyperbolic
  meromorphic functions}, Math. Proc. Cambridge Philos. Soc. \textbf{127}
  (1999), no.~2, 271--288. \MR{1705459}

\bibitem[Sul82]{sullivan}
Dennis Sullivan, \emph{Seminar on conformal and hyperbolic geometry}, Preprint
  IHES, 1982.

\bibitem[Tho12]{klaus}
Klaus Thomsen, \emph{K{MS} states and conformal measures}, Comm. Math. Phys.
  \textbf{316} (2012), no.~3, 615--640. \MR{2993927}

\bibitem[Urb03]{MU-measures}
Mariusz Urba{\'n}ski, \emph{Measures and dimensions in conformal dynamics},
  Bull. Amer. Math. Soc. (N.S.) \textbf{40} (2003), no.~3, 281--321.
  \MR{1978566}

\bibitem[UZ03]{urbanski-zdunik2}
Mariusz Urba{\'n}ski and Anna Zdunik, \emph{The finer geometry and dynamics of
  the hyperbolic exponential family}, Michigan Math. J. \textbf{51} (2003),
  no.~2, 227--250. \MR{1992945}

\bibitem[UZ04]{urbanski-zdunik1}
\bysame, \emph{Real analyticity of {H}ausdorff dimension of finer {J}ulia sets
  of exponential family}, Ergodic Theory Dynam. Systems \textbf{24} (2004),
  no.~1, 279--315. \MR{2041272}

\bibitem[UZ07]{urbanski-zdunik3}
\bysame, \emph{Geometry and ergodic theory of non-hyperbolic exponential maps},
  Trans. Amer. Math. Soc. \textbf{359} (2007), no.~8, 3973--3997. \MR{2302520}

\bibitem[VV10]{VV}
Paulo Varandas and Marcelo Viana, \emph{Existence, uniqueness and stability of
  equilibrium states for non-uniformly expanding maps}, Ann. Inst. H.
  Poincar\'e Anal. Non Lin\'eaire \textbf{27} (2010), no.~2, 555--593.
  \MR{2595192}

\end{thebibliography}

\end{document}